\documentclass[12pt,reqno]{amsart}
\usepackage{amsmath, amsfonts, amssymb, amsthm,mathrsfs,color,hyperref, url}
\usepackage{verbatim}
\usepackage{enumerate}
\usepackage{cite}
\usepackage{color}
\usepackage{colortbl,dcolumn}
\usepackage[margin=1in,footskip=0.25in]{geometry}

\def\Z{\mathbb Z}

\def\N{\mathbb N}

\def\R{\mathbb R}

\def\cB{\mathcal B}

\def\cA{{\mathcal A}}
\def\cB{{\mathcal B}}

\def\1{{\bf 1}}

\theoremstyle{plain}
\newtheorem{theorem}{Theorem}

\newtheorem{lemma}{Lemma}

\newtheorem{corollary}{Corollary}
\theoremstyle{definition}

\theoremstyle{remark}
\newtheorem*{Rem}{Remark}

\usepackage{todonotes}

\begin{document}

\author{Oleksiy Klurman}
\address{
Department of Mathematics,
University of Bristol, Woodland Rd, Bristol BS8 1UG}
\email{oleksiy.klurman@bristol.ac.uk}

\author{Marc Munsch}
\address{
Université Jean Monnet, Centrale Lyon, INSA Lyon, Universite Claude Bernard Lyon 1, CNRS, ICJ UMR5208, 42023 Saint-
Etienne, France.}
\email{marc.munsch@univ-st-etienne.fr}

\author{Yu-Chen Sun}
\address{
Department of Mathematics,
University of Bristol, Woodland Rd, Bristol BS8 1UG}
\email{yuchensun93@163.com}
\title[Small values of logarithmic means of multiplicative functions]
{Small values of signed harmonic sums and logarithmic means of multiplicative functions}
\keywords{Harmonic sums, multiplicative functions, Liouville function.}
\subjclass[2010]{Primary 11N56, Secondary 11B99, 11M20.}

\maketitle
\begin{abstract} 
We construct sequences $\{a_n\}_{n\in\mathbb{N}}\in\{-1,1\}^{\mathbb{N}}$ with small values of signed harmonic sums
\[
\sum_{n\in\mathcal{A}\cap[1,N]}\frac{a_n}{n},
\]
for any reasonably dense subsets $\mathcal{A}\subset\mathbb{N}.$ We apply these methods to further construct completely multiplicative functions $f:\mathbb{N}\to\{-1,1\}$ with unusually small logarithmic partial sums, that is,
\[
\sum_{n \leq N}\frac{f(n)}{n} \ll \exp\left(-c_0 \frac{N^{1/3}}{(\log N)^{1/3}} \right)
\]
holds for infinitely many $N\to\infty$. The proofs combine careful analysis of the small-scale distribution of random harmonic sums over subsets of $\mathbb{N}$, together with deterministic inductive arguments inspired by the ``anatomy" of integers.
\end{abstract}
    \section{Introduction}
    \setcounter{lemma}{0} \setcounter{theorem}{0}
    \setcounter{equation}{0}
    Let $f$ be a completely multiplicative function that takes values in $\{-1,1\}$ and let
\[
\mathcal{M}(f,x):=\sum_{n \leq x}f(n), \quad \text{and} \quad L(f,x):=\sum_{n \leq x}\frac{f(n)}{n}.
\]
It is easy to show that for a ``typical" (random) $f,$ partial sums $\mathcal{M}(f,x)$ take both positive and negative values, and hence $\mathcal{M}(f,N)=0$ for infinitely many integers $N;$ for more precise results on sign changes of $\mathcal{M}(f,N),$ we refer the reader to some recent works \cite{Aymone2, Aymone, AHZ, GH, KLM}. If one seeks explicit examples of such functions, consider a ``modified" quadratic character, e.g. the completely multiplicative function such that
\[
\chi_{-3}^*(p)=
\begin{cases}
\left(\frac{p}{3}\right)  & (p,3)=1;\\
-1 &  p=3.
\end{cases}
\]
We have 
\[
\sum_{n \leq 3^{K}+1} \chi_{-3}^*(n) =1+ \sum_{k=0}^{K}(-1)^k\sum_{\substack{1 \leq a \leq 3^{K-k} \\ (a,3)=1}} \chi_{-3}(a)= (-1)^{K}+1,
\]
which implies that $\mathcal{M}(\chi_{-3},3^{2k+1})=0.$\\

In this paper, we confine ourselves to the much more subtle question of how small the logarithmic means $L(f,N)$ can be. Apart from being a natural question, the motivation for studying this problem is related to the infamous Siegel zero problem. Indeed, by Siegel's theorem, we have that for a quadratic character $\chi_d,$
$$L(1,\chi_d)=\sum_{n\ge 1}\frac{\chi_d(n)}{n}=\sum_{n\le d}\frac{\chi_d(n)}{n}+O\left(\frac{1}{\sqrt{d}}\right) > \frac{C(\epsilon)}{d^{\epsilon}},$$
and improving this lower bound constitutes a notorious open problem. Moreover, in a related work \cite{MR2024414}, Granville and Soundararajan showed that with positive probability there exists a family of Dirichlet characters $\chi_d,$ such that \[L(1,\chi_d)\ll \frac{1}{\log\log d}.\] 
It is therefore desirable to understand to what extent multiplicativity alone allows one to lower bound $L(f,N).$ 
Our first result shows that $L(f,N)$ can be remarkably small infinitely often.

\begin{theorem}\label{t:small_log_mean_mult_f}
There exist a completely multiplicative function $f:\mathbb{N}\to \{-1,1\}$, an infinite sequence $\{N_i\}_{i=0}^{\infty}$, and a constant $c_0 \in (0,1)$ such that for all $i> 0$,
\[
\sum_{1 \leq n \leq N_i} \frac{f(n)}{n} \ll \exp\left(-c_0 \frac{N_i^{1/3}}{(\log N_i)^{1/3}} \right)
.\]
\end{theorem}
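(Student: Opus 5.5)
We build $f$ inductively on primes, fixing its values in blocks and choosing the endpoints $N_i$ along the way. The central point is that prime signs give a lot of freedom. Once $f$ is fixed on all primes $p\le N_{i-1}$ (or some smaller threshold $y_i$), the values $f(n)$ for $n\le N_i$ that are still free are those of $n$ having a prime factor in a new range $(y_i,N_i]$.

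Restrict to the primes $p\in(N_i/2,N_i]$ and suppose everything smaller is already fixed. Each such $p$ appears in $[1,N_i]$ only as $n=p$, so
\[
L(f,N_i)=S_i+\sum_{N_i/2<p\le N_i}\frac{\epsilon_p}{p},
\]
where $S_i$ is already determined and the $\epsilon_p\in\{\pm1\}$ are free. The task then reduces to a signed harmonic sum over primes: choose signs making $\sum \epsilon_p/p$ approximate $-S_i$ to within $\exp(-cN_i^{1/3}/(\log N_i)^{1/3})$.

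The counting heuristic explains the exponent. With $k$ free signs, the $2^k$ sums lie in an interval of length $O(1)$, so one can hope for precision about $2^{-k}$. Achieving that is a small-scale equidistribution (local limit) problem for $\sum\epsilon_p/p$. Over all $\sim N/\log N$ primes near $N$ this would be far too strong to prove. Instead we use only a subset of $k\approx N^{1/3}/(\log N)^{1/3}$ primes, for which we can control the characteristic function
\[
\phi(t)=\prod_p\cos(t/p)
\]
up to frequencies $t\approx e^{ck}$.

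The key step is a lemma that the random sum $X=\sum_{p\in\mathcal P}\epsilon_p/p$ hits every subinterval of length $\delta=e^{-ck}$ inside $[-\tfrac12\sigma,\tfrac12\sigma]$ with positive probability, where $\sigma$ is its standard deviation. To prove it we smooth with a Beurling–Selberg type kernel $K_\delta$ of Fourier support $|t|\le 1/\delta$ and write
\[
\mathbb P(X\in I)\approx\int\hat K(t)\phi(t)\,dt.
\]
The range $|t|\le T_0$ gives the Gaussian main term. For larger $|t|$ we must show $|\phi(t)|$ is tiny. Using $|\cos x|\le \exp(-c\|x/\pi\|^2)$, this needs many primes $p\in\mathcal P$ with $\|t/(\pi p)\|$ bounded away from $0$ for every $t\le e^{ck}$. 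This is the main obstacle: it is a "no simultaneous Diophantine approximation" statement for the reciprocals $1/p$. I would try to get it by choosing $\mathcal P$ as primes in a short range with $p^3$-sized structure, which is where the exponent $1/3$ plausibly comes from. If $t/p$ were close to integers $m_p$ for many $p$, then differences $t(1/p-1/q)$ would force $m_pp\approx m_qq$. Counting these near-coincidences with bounds on integer points near curves, or a divisor-type argument, should give at least $\gg k$ primes with $\|t/p\|\gg1$ once $t\le \exp(c N^{1/3}/(\log N)^{1/3})$.

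The induction closes as follows. The initial target $S_i$ must lie within the range $\pm\tfrac12\sigma$ of $X$, where $\sigma\asymp k^{1/2}/N$. Since $S_i=L(f,N_i/2)+O(1/N)$, this holds because the previous step made $L(f,N_{i-1})$ extremely small. Between $N_{i-1}$ and $N_i/2$ we choose the intermediate prime signs so that $L$ stays small at all times, for instance by a greedy correction using primes in dyadic blocks, each of which can move $L$ by about $\sqrt{\#\text{block}}/x\gg 1/x$. Primes in $(N_i/2,N_i]$ outside $\mathcal P$ are likewise set greedily before $\mathcal P$ is used. Taking $N_i$ rapidly increasing, the lemma then gives signs on $\mathcal P$ with $|L(f,N_i)|\le\delta=\exp(-c_0N_i^{1/3}/(\log N_i)^{1/3})$, which proves Theorem \ref{t:small_log_mean_mult_f}.
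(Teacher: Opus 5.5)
Your reduction to the primes in $(N_i/2,N_i]$ (which occur in $[1,N_i]$ only as $n=p$) and your smoothed Fourier approach match the paper. However, the key lemma as you set it up does not go through. You keep only $k\approx N^{1/3}/(\log N)^{1/3}$ primes, and you then need, for every $t\le e^{ck}$, a positive proportion of $p\in\mathcal P$ with $\|t/p\|$ bounded below by a constant.

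The near-coincidence/divisor count cannot give this. If $\|t/p\|\le\delta$, then $p$ divides some integer $m$ with $|m-t|<\delta N$. So the number of such $p$ is at most $(2\delta N+1)\frac{\log 2t}{\log(N/2)}$. This is $\le k/2$ only if $\delta\lesssim k\log N/(N\log t)$, i.e.\ $\delta\lesssim(\log N)/N$ once $\log t\approx k$, and then $\exp(-c\delta^2 k)=1-o(1)$. Primes in a short range $[N,N+H]$ with $H=o(N)$ are even worse: $\|N/p\|\le H/N$ for all of them, so $|\phi|$ is close to $1$ at the corresponding frequency $t\asymp N$. With a constant threshold you face a hard simultaneous Diophantine approximation problem for the $1/p$, and the proposal does not resolve it.

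The missing idea is that you never need to discard variables. Smoothing at scale $\eta$ only requires control of $\phi(t)$ for $|t|\lesssim 1/\eta$, and more primes only make $|\phi|$ smaller. Lemma~\ref{l:key_local_lemma} uses all $\asymp N/\log N$ primes $\mathcal B$ of a constant-ratio interval, with a threshold that shrinks with $t$, namely $\delta\asymp\frac{|\mathcal B|}{N}\frac{\log\log t}{\log t}$. The same divisor count then gives $\#\{p\in\mathcal B:\|t/p\|\le\delta\}\le|\mathcal B|/2$. Hence $|\phi(t)|\le\exp(-\pi^2\delta^2|\mathcal B|)\le t^{-2}$ as long as $(\log t)^3\lesssim(\log\log t)^2|\mathcal B|^3/N^2$, i.e.\ $\log t\lesssim N^{1/3}/(\log N)^{1/3}$. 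The exponent $1/3$ comes from this cubic balancing, not from the number of primes used.

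The induction has a second gap. The claim $S_i=L(f,N_i/2)+O(1/N_i)$ is false. $S_i$ also contains $\sum f(n)/n$ over the $\asymp N_i/2$ composite $n\in(N_i/2,N_i]$, whose values are already fixed by smaller primes. The remaining primes of $(N_i/2,N_i]$ have total weight only $\asymp 1/\log N_i$, so flipping them can reach $O(1/N_i)$ only if $|S_i|\ll 1/\log N_i$ beforehand. Your greedy rule of keeping $L$ small at all times does not prove this. The value $f(p)$ enters $L(f,x)$ at every later $x$, through the term $\frac{f(p)}{p}L(f,x/p)$ when $p>\sqrt x$. You give no argument that these feedback terms and the composite contributions stay $\ll 1/\log x$ across the huge range $[N_{i-1},N_i]$.

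The paper avoids this issue entirely. There $f$ equals $\lambda$ except on the sparse blocks $J_i=[N_i/(C+1),N_i/C]\cup[N_i/2,N_i]$. Perron's formula with the Vinogradov--Korobov zero-free region then gives $\sum_{n\le N_{k+1}}\lambda^*(n)/n\ll(\log N_{k+1})^{-A}$; the Euler factors at the modified primes are harmless because the blocks are so sparse. So the prime number theorem controls the composites before the flipping trick and the local lemma are applied.
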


To gauge the strength of Theorem \ref{t:small_log_mean_mult_f}, we make several preliminary remarks. First, if $f$ is a random completely multiplicative function (e.g. $f(p)$ is chosen to be $\pm 1$ independently with probability $1/2$), one can show that, with high probability, $L(f,N)$ can be well approximated by the Euler product
\[\prod_{p\le N}\left(1-\frac{f(p)}{p}\right)^{-1}\sim \exp\left(\sum_{p\le N}\frac{f(p)}{p}\right),\]
This implies that $L(f,N)$ is strongly concentrated around some constant value $c>0,$ which is obviously far from our target (unlike in the case of small values of unweighted partial sums $\mathcal{M}(f,N)$).

We further note that if $\lambda(n)$ is the Liouville function, then the prime number theorem implies that for any $A>0$,
\begin{equation}\label{e:Liouvill_pnt}
\sum_{n \leq x}\frac{\lambda(n)}{n} = o\left(\frac{1}{\log^A x} \right).
\end{equation}
On the other hand, assuming the Riemann Hypothesis, the above bound can be upgraded to  $o(x^{-1/2+\epsilon})$. It is also easy to see that the prime number theorem yields
\[
|L(f,N)|\geq \frac{1}{\text{lcm}(1,\dots,N)} = \exp(-(1+o(1))N),
\]
implying that the logarithmic sum can never decay faster than $\exp(-(1+o(1))N)$. Moreover, one immediately observes that $$L(f,N)\gg \frac{1}{N},$$ for infinitely many $N.$ These observations suggest that a reasonable goal is to look for an infinite subsequence $\{N_i\}_{i=0}^{\infty}$ such that $L(f,N_i)$ exhibits close to exponential decay, which is in line with the conclusion of our Theorem \ref{t:small_log_mean_mult_f}.\\

\subsection{Small values of signed harmonic sums.} 

In order to prove Theorem \ref{t:small_log_mean_mult_f}, we study the more general question of how small one can make 
\[{\bf X}_{\cA}=\sum_{n\in\mathcal{A}\cap[1,N]}\frac{a_n}{n},\]
where $a_i\in \{-1,1\}$ and $\mathcal{A}\subset \mathbb{N}$ is a subset of integers.\\ 
In the case of the full sum $\mathcal{A}=\mathbb{N},$ Bettin, Molteni, and Sanna \cite{BMS18} used probabilistic tools (see also \cite{BMS20} for a different ``greedy" approach) to verify the existence of a sequence $\{a_n\}_{n=1}^{N}$ in $\{-1,1\}^N$ such that 
\begin{equation}\label{e:har_ser_weak_bdd}
\sum_{n \leq N} \frac{a_n}{n} \ll \exp(- \log^2 N).
\end{equation}
This bound was later improved by Gambini, Tonon and Zaccagnini \footnote{This result is credited in their paper to J. Benatar and A. Nishry.} \cite[Corollary 15]{GTZ20} to $\exp(- N^{1/3-\epsilon}).$ To this end, we prove the following general result.

\begin{theorem}\label{t:postive_density_harmonic_series}
Let $\cA_0 \subset \N$ have positive lower density, that is, for some $0<\delta<1$,
\[
\liminf_{N \to \infty}\frac{|\cA_0 \cap [N]|}{N}>\delta.
\]
Then there exists a sufficiently large integer $N_0$ such that if $N>N_0$, there exists a sequence $\{a_n\}_{n=1}^{N}$ in $\{-1,1\}^N$ such that 
\[
\left|\sum_{n \in \cA_0 \cap[N]} \frac{a_n}{n} \right|\leq \exp\left(-N^{\theta(\delta)} \right), \quad \text{with } \theta(\delta)>0.
\]
\end{theorem}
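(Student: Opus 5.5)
The plan is a two-stage argument: a greedy choice of signs brings the sum down to size about $1/N$, and then a small-scale distribution estimate for a random harmonic sum over a structured subset of $\cA_0$ closes the remaining gap. We set $\cA=\cA_0\cap[N]$ and split it as $\cA=\cA'\cup\cB$. Here $\cB$ is a block of elements of $\cA$ lying in a window $[N/2,N]$, chosen so that $|\cB|\gg \delta N$; positive lower density guarantees such a block. The set $\cA'$ is what remains.

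\textbf{Step 1 (greedy reduction).} We choose the signs $a_n$ for $n\in\cA'$ greedily in decreasing order of $n$: at each step we pick the sign that moves the partial sum toward $0$. This yields $|S'|:=\bigl|\sum_{n\in\cA'}a_n/n\bigr|\le 1$. We then keep some freedom in $\cB$: with a greedy/balancing argument on half of $\cB$ (terms of size $\asymp 1/N$), we reduce to a target value $t$ with $|t|\ll 1/N$. It remains to find signs on the other half $\cB'$ (still of size $\gg\delta N$) with
\[
\Bigl|\sum_{n\in\cB'}\frac{a_n}{n}+t\Bigr|\le \exp(-N^{\theta}).
\]

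\textbf{Step 2 (small-scale local limit).} Take independent uniform signs $\epsilon_n$ on $\cB'$ and let $X=\sum_{n\in\cB'}\epsilon_n/n$. It suffices to show
\[
\mathbb P\bigl(|X+t|\le \eta\bigr)>0 \quad\text{for } \eta=\exp(-N^{\theta}).
\]
We smooth the indicator by a Beurling--Selberg or Fejér-type minorant $\phi_\eta$ with $\hat\phi_\eta$ supported in $|\xi|\le 1/\eta$ and $\int\phi_\eta\asymp\eta$. Then
\[
\mathbb P(|X+t|\le\eta)\ge \int\hat\phi_\eta(\xi)e(\xi t)\prod_{n\in\cB'}\cos(2\pi\xi/n)\,d\xi .
\]
For $|\xi|\le cN$ the product is $\ge \exp(-C\xi^2\sum_{n \in \cB'} n^{-2})$. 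Since $\sum_{n\in\cB'}n^{-2}\asymp 1/N$ and $|t|\ll 1/N$, this range contributes a main term $\gg \eta\cdot N^{-1/2}$.

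\textbf{Step 3 (large frequencies).} This is the main obstacle. We must show that
\[
\prod_{n\in\cB'}|\cos(2\pi\xi/n)|\le \exp(-N^{\theta'})
\]
uniformly for $cN\le|\xi|\le \exp(N^{\theta})$. The argument is that for many $n\in\cB'$ the quantity $\xi/n$ is at distance $\ge\kappa$ from $\tfrac12\Z$, and each such factor is $\le 1-c\kappa^2$. If instead $\|2\xi/n\|<\kappa$ held for all but few $n$ in a large subset of $\cB'$, then we compare consecutive or nearby elements $n,m\in\cB'$ with $|n-m|$ small. This gives $\|2\xi(1/n-1/m)\|$ small. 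Since $2\xi|1/n-1/m|\approx 2\xi|n-m|/N^2$, a dense set of $n$ forces $2\xi/n$ to lie near integers along an arithmetic structure, and that pins down $\xi$. This is a Diophantine step. We would make it quantitative with a pigeonhole/divisor argument: use $\gg \delta N$ integers in a dyadic range and the fact that $\xi/n$ close to half-integers for many $n$ forces $\xi$ to be close to a multiple of $\mathrm{lcm}$ of many of them. Because a dense subset of $[N/2,N]$ has lcm of size $\exp(\gg\delta N)$, this is impossible once $|\xi|\le \exp(N^{\theta})$ and at least $N^{\theta'}$ of the $n$ are involved. The exponent $\theta(\delta)$ comes out of balancing the size of these exceptional sets against $\delta$. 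It is here that we expect to use the harmonic-sum distribution results the paper develops rather than anything crude.

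Combining the main term $\gg\eta N^{-1/2}$ with the error $\le \eta^{-1}\exp(-N^{\theta'})$ (after choosing $\theta<\theta'/2$) gives positivity. Hence a sign choice exists, which proves the theorem for $N>N_0$.
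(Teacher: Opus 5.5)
\textbf{Step 3 has a genuine gap, and it is the core of the argument.} Your overall architecture matches the paper's: deterministic signs bring the sum to $O(1/N)$, then a Fourier local-limit estimate handles a random sum over a large subset near $N$. But Step 3 is left as a heuristic that you explicitly defer, and the mechanism you sketch is false.

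A dense subset of $[N/2,N]$ need not have lcm of size $\exp(\gg\delta N)$. Take $\cA_0=\{n: P^+(n)\le n^{\varepsilon}\}$, which has density $\rho(1/\varepsilon)>0$. Every element of your block $\cB'\subset[N/2,N]$ is then $N^{\varepsilon}$-smooth, so $L=\mathrm{lcm}(\cB')\le N^{\pi(N^{\varepsilon})}=\exp(O_\varepsilon(N^{\varepsilon}))$, and $\prod_{n\in\cB'}\cos(2\pi L/n)=1$. Hence no uniform decay on $cN\le|\xi|\le \exp(N^{\theta})$ holds for an arbitrary dense block once $\theta>\varepsilon$. The exponent must be dictated by the multiplicative structure of the set, which is why the theorem only gives some $\theta(\delta)$ (compare the paper's remark with $\Psi(N,N^{1/10})$).

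The lcm/CRT picture is also the wrong mechanism. The condition $\|\xi/n\|<\kappa$ only puts $\xi$ within $\kappa n\asymp\kappa N$ of a multiple of $n$. Such coarse conditions do not combine to force $\xi$ near a multiple of an lcm. What works is counting:
\[
\#\{n\in\cB:\|\xi/n\|\le\kappa\}\le\sum_{|m-\xi|\le\kappa N}\#\{n\in\cB: n\mid m\}.
\]
For this to be small you need a subset $\cB$ on which the number of divisors of $m$ lying in $\cB$ is polylogarithmic in $m$.

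\textbf{The missing idea is how to extract such a $\cB$ from a merely dense set.} The paper writes $n=n_{r,\epsilon_1}n_{s,\epsilon_1}$, with $n_{r,\epsilon_1}$ the $N^{\epsilon_1}$-rough part, and keeps one element of $\cA$ for each distinct rough part. Each rough part has at most $k\le 1/\epsilon_1$ prime factors, so the inner count above is at most $\sum_{j\le k}\omega(m)^j\ll(\log \xi)^k$. Lemma~\ref{l:multi_str_positive_density_set}, an anatomy argument using $\Psi(x,N^{\epsilon_1})\approx x\rho(u)$ and $\rho(u)=u^{-u(1+o(1))}$, shows that for suitable $\epsilon_1=\epsilon_1(\delta,\epsilon_0)$ there are at least $N^{1-\epsilon_0}$ distinct rough parts. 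Note that $|\cB|$ is then only about $N^{1-\epsilon_0}$, not $\gg\delta N$. Choosing $\kappa\asymp(\log\xi)^{-k}|\cB|/N$ gives $\bigl|\prod_{n\in\cB}\cos(2\pi\xi/n)\bigr|\le\exp(-c\kappa^2|\cB|)\le\xi^{-2}$ for $\xi$ up to $\exp(N^{\theta})$ with $\theta\approx(1-3\epsilon_0)/(2k+1)$. This is Lemma~\ref{l:key_local_lemma}; with this $\cB$ your Steps 2--3 go through.

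\textbf{Two smaller problems in Step 1 also need fixing.}
\begin{enumerate}
\item Greedy in decreasing order of $n$ only yields $|S'|\le 1$: the last, largest term can leave an error near $1$. Half of $\cB$ has total mass at most $\sum_{N/2<n\le N}1/n\approx\log 2$, and in general only $\asymp\delta$, so it cannot absorb that error. You should instead process in increasing order of $n$ and use divergence of $\sum_{n\in\cA_0}1/n$, as in Lemma~\ref{l:deter_low_upp_large_bdd}.
\item Lower density $>\delta$ does not give $\gg\delta N$ elements in $[N/2,N]$ when $\delta<1/2$. For example, delete $(N_k/2,N_k]$ along a very sparse sequence $N_k$. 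You should work in $[\delta N/2,N]$, as the paper does.
\end{enumerate}
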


\begin{Rem}
    The exponent $\theta(\delta)$ depends on the ``anatomy" of integers inside the set $\cA_0$. Moreover, for $\delta \to 0,$ the estimate $\theta(\delta) \gg (\log(1/\delta))^{-1}$ holds.
\end{Rem}

Interestingly, Theorem \ref{t:postive_density_harmonic_series} is optimal in several different ways. First, if we replace the infinite set $\mathcal{A}_0$ by a large finite subset of $[N]:=\{1,2,3,\dots,N\}$, the above result is no longer true. Indeed, if $\cA_0=[N/2,N] \cup \{1\},$ then for any sequence $\{a_n\}_{n=1}^{N}$ in $\{-1,1\}$,
\[
\left|\sum_{n \in \cA_0}\frac{a_n}{n} \right| \gg 1.
\]
Moreover, we cannot replace $\liminf$ with $\limsup$ in the statement of Theorem \ref{t:postive_density_harmonic_series}, that is, replace {\it positive lower density} with {\it positive upper density}. To see this, we take $\cA_0$ as the union of the lacunary intervals
\[
\cA_0 = \bigcup_{i=0}^{\infty}[N_i,p_i],
\]
where $N_0$ is a sufficiently large integer,
\[
N_{i}=e^{e^{N_{i-1}^2}}, \quad \text{and} \quad p_i=p_{\pi(2N_i)}. 
\] Note that $\cA_0$ has upper density $\geq 1/2.$ Moreover, if $N\in [\frac{1}{3}N_i, \frac{1}{2}N_i]$, then by the prime number theorem,
\[
\left|\sum_{n \in \cA_0 \cap [N]} \frac{a_n}{n} \right| = \left| \sum_{n \in \cA_0 \cap [p_{i-1}]}\frac{a_n}{n} \right| \gg \frac{1}{\exp\left(\sum_{n \leq p_{i-1}}\Lambda(n) \right)}\gg \frac{1}{e^{(1+o(1))p_{i-1}}}>\frac{1}{(\log N)^{100}}.
\]

In view of the above, we show that if $\mathcal{A}_0$ has positive upper density, then the analog of Theorem \ref{t:postive_density_harmonic_series} holds along subsequences.

\begin{theorem}\label{t:postive_upp_density_harmonic_series}
Let $\cA_0 \subset \N$ have positive upper density, that is, for some $0<\delta<1$,
\[
\limsup_{N \to \infty}\frac{|\cA_0 \cap [N]|}{N}>\delta.
\]
There exists an infinite sequence of integers $\{N_i\}_{i=0}^{\infty}$ and a sequence $\{a_n\}_{n=1}^{\infty}$ in $\{-1,1\}$ such that, for all $i \geq 0$, 
\[
\left|\sum_{n \in \cA_0 \cap[N_i]} \frac{a_n}{n} \right|\leq \exp\left(-N_i^{\theta(\delta)} \right), \quad \text{with } \theta(\delta)>0.
\]
\end{theorem}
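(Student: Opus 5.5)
We reduce Theorem \ref{t:postive_upp_density_harmonic_series} to the finite-scale mechanism behind Theorem \ref{t:postive_density_harmonic_series}, and then glue the scales together inductively along a very sparse sequence. Fix $\delta$ and pick a sequence $M_1<M_2<\cdots$ with $|\cA_0\cap[M_j]|>\delta M_j$. The construction is then carried out in stages. Before stage $i$, the signs $a_n$ for $n\le N_{i-1}$ are already fixed, and $S_{i-1}:=\sum_{n\in\cA_0\cap[N_{i-1}]}a_n/n$ is a known real number with $|S_{i-1}|\le \exp(-N_{i-1}^{\theta})$. At stage $i$ we choose $N_i=M_j$ for some $j$ with $M_j$ enormous compared to $N_{i-1}$, say $M_j>\exp(\exp(N_{i-1}))$. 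We must then choose the signs on $\cA_0\cap(N_{i-1},N_i]$ so that their harmonic sum equals $-S_{i-1}$ up to an error at most $\exp(-N_i^{\theta})$.

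The key point is that the proof of Theorem \ref{t:postive_density_harmonic_series} really only needs density information at the single scale $N$. For the upper-density version I would therefore isolate the following finite-scale statement, extracted from the anatomy/random-sums argument. Suppose $\cB\subset[N]$ with $|\cB\cap(N/K,N]|\gg\delta N$, and $T$ is any target with $|T|\le 1$. Then there exist signs with
\[
\Bigl|\sum_{n\in\cB}\frac{a_n}{n}-T\Bigr|\le\exp(-N^{\theta(\delta)}).
\]
I would prove it in three steps.

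First, take the elements of $\cB$ in a dyadic-type window $(N/K,N]$. Density $\delta$ at scale $N$ forces such a window, with $K=K(\delta)$ since $\log(1/\delta)$ governs it. In this window, pigeonhole on the anatomy gives many $n$ sharing a large common structure. Concretely, we want a set of $\gg N^{\theta}$ elements divisible only by primes up to $N^{1-\theta'}$, or else a large set with a common large prime factor $p$, whose reciprocals then have denominators with controlled lcm.

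Second, we use a small-scale distribution (local limit / Fourier) argument for the random sum $\sum \epsilon_n/n$ over those elements. The characteristic function decays for frequencies up to $\exp(N^{\theta})$, because many terms have large distinct prime factors. So the random sum has a density that is positive near $0$, and it hits every interval of length $\exp(-N^{\theta})$ near its mean.

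Third, a greedy choice of the remaining signs in $\cB$ brings the partial sum within $O(1/N)$ of the target $T$, and the random part then corrects the error to within $\exp(-N^\theta)$. Targets with $|T|\le 1$ are reachable because the harmonic sum over $\cB\cap(N_{i-1},N]$ has size $\gg\delta\log(1/\delta)\cdot$const. Since $N_{i-1}$ is negligible, the set $\cB=\cA_0\cap(N_{i-1},N_i]$ retains density $>\delta/2$ at scale $N_i$. Applying the statement with $T=-S_{i-1}$ gives $|S_i|\le\exp(-N_i^{\theta(\delta/2)})$, and induction yields the infinite sequence $\{a_n\}$.

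The main obstacle is the first two steps, namely showing that the Fourier decay, and hence the fine-scale equidistribution of the random harmonic sum, depends only on the density of $\cA_0$ at the single scale $N_i$. In Theorem \ref{t:postive_density_harmonic_series} it may have used lower density at all smaller scales, for example to iterate over scales $N^{1/2},N^{1/4},\dots$. If so, I would try to replace that iteration by working only inside $(N_i/K,N_i]$. The elements there with a prime factor $p\in(N_i^{1/2},N_i]$ give reciprocals $1/(mp)$ whose phases at frequency $\xi$ spread out, which yields $|\prod\cos(\xi/n)|\le\exp(-cN_i^{\theta})$ for $\xi$ in the needed range.
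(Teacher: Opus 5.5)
Your architecture is sound and is essentially the paper's. Only single-scale density is needed. A deterministic greedy-plus-flip step brings the sum within $O(1/N_i)$ of the target, a random correction on a structured subset of $[cN_i,N_i]$ finishes, and very sparse scales are glued inductively. (The paper's Lemma~\ref{l:deter_low_upp_large_bdd}(2) keeps earlier partial sums bounded by greedy; feeding $-S_{i-1}$ in as the target also works.)

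The gap is in Steps 1--2. You flag these yourself, they carry all of the arithmetic content, and what you propose there fails:
\begin{itemize}
\item Denominators with controlled lcm $L$ are an obstruction, not a help. The random sum then lives on $\frac{1}{L}\Z$, so its characteristic function equals $1$ at $t=L$.
\item A common large prime $p$ merely rescales a harmonic sum at scale $N/p$.
\item A set of only $N^{\theta}$ elements is far too small for the short-interval divisor count that controls $\#\{n\in\cB:\|t/n\|\le\delta'\}$. That argument needs $|\cB|^3\gg N^2(\log t)^{2k+1}$, i.e.\ $|\cB|\ge N^{2/3+\epsilon}$.
\item Your fallback, using elements with a prime factor in $(N^{1/2},N]$, breaks for small $\delta$. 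The set $\cA_0=\{n:P^+(n)\le n^{1/2}\}$ has density $1-\log 2$ and contains no such element. More generally, $\{n:P^+(n)\le n^{1/u}\}$ shows the roughness threshold must shrink with $\delta$; this is where $\theta(\delta)\gg 1/\log(1/\delta)$ comes from.
\item Even when large primes occur, a density-$\delta$ window can use only about $N^{1/2+O(\delta)}$ distinct primes above $N^{1/2}$. So distinct large primes alone cannot reach the $N^{2/3}$ threshold.
\end{itemize}

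The missing idea is Lemma~\ref{l:multi_str_positive_density_set}. For $\epsilon_0<1/3$, choose $\epsilon_1=\epsilon_1(\delta,\epsilon_0)$ so that the $N^{\epsilon_1}$-rough parts of elements of $\cA=\cA_0\cap[\delta N/2,N]$ take at least $N^{1-\epsilon_0}$ distinct values. Suppose not. Elements with rough part $>N^{1-\epsilon_0/2}$ then number at most $N^{1-\epsilon_0}\cdot N^{\epsilon_0/2}$. The rest have rough part $r\le N^{1-\epsilon_0/2}$ and an $N^{\epsilon_1}$-smooth cofactor, and $\sum_r\Psi(N/r,N^{\epsilon_1})\ll N\rho(\epsilon_0/2\epsilon_1)/\epsilon_1<\delta N/4$ once $\epsilon_1$ is small, a contradiction.

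Keeping one element per rough part gives $\cB$ with $b\mapsto r$ injective and $\Omega(r)\le k:=1/\epsilon_1$. Hence $\#\{n\in\cB:\|t/n\|\le\delta'\}\le\sum_{|m-t|<\delta'N}\#\{r\in R(\cB):r\mid m\}\le 2k\delta'N(\log t)^k$. Taking $\delta'\asymp|\cB|/(N(\log t)^k)$ gives $\bigl|\prod_{n\in\cB}\cos(2\pi t/n)\bigr|\le t^{-2}$ for $N\ll t\le\exp(N^{\theta})$, with $\theta\approx(1-3\epsilon_0)/(2k+1)$. This is exactly Lemma~\ref{l:key_local_lemma}.

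Two minor repairs are also needed:
\begin{itemize}
\item State your finite-scale claim only for $|T|\ll\delta$, not $|T|\le 1$. A density-$\delta$ window may have harmonic mass only about $\log\frac{1}{1-\delta}$, not $\gg\delta\log(1/\delta)$. This suffices, since $T=-S_{i-1}$ is tiny.
\item Trim the random subset to about $N^{1-\epsilon_0}$ elements, so that enough of the window remains for the deterministic flip.
\end{itemize}
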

We now briefly highlight some general ideas that underpin the proofs of our results. By choosing a random sequence $\{a_n\}_{n=1}^{N}$ uniformly distributed in $\{-1,1\}$, our aim is to show that
    \[
    \mathbb{P}\big( |{\bf X}_{\cA}-x_0| \leq \eta \big)>0.
    \]
holds with positive probability for appropriately chosen small (but not too small!) values of $x_0$ and $\eta$. The problem is then reduced to studying the Fourier decay of the corresponding density function, which takes the form
\[
\hat{\rho}_{\cA}(t) = \prod_{n \in \cA} \cos \left( \frac{2\pi t}{n} \right), \quad \cA \subset \N
\]
for $t$ in an intermediate range, say $t \in (N, U(N))$. The larger $U(N)$ is so that $\hat{\rho}_{\cA}(t) \ll 1/t^2$ holds, the better the upper bound for $\displaystyle{\sum_{n \leq N, n \in \cA}a_n/n}$ can be obtained. We begin by proving a key general Lemma \ref{l:key_local_lemma}, which ensures that if for a set $\cA$ one can select a large subset $\cB,$ satisfying certain multiplicative properties, then one can guarantee suitable decay of $\hat{\rho}_{\cA}(t).$ We then use “anatomy of integers” arguments related to smooth and rough numbers (see Lemma \ref{l:multi_str_positive_density_set}) to show that such a choice is possible if $\cA$ is any sufficiently dense subset of integers.

The idea of proving Theorem \ref{t:small_log_mean_mult_f} is based on a decomposition of $L(f,N)$ into two deterministic and one random parts. We start with the Liouville function $\lambda(n),$ which, by the prime number theorem, gives reasonably small partial sums 
\[\sum_{n\le N}\frac{\lambda(n)}{n}=O\left(\frac{1}{\log^A N}\right).\]
We then use a deterministic part of the form  
\[
\sum_{c_1 N<p \leq c_2 N} \frac{\lambda(p)}{p}, \quad \text{with} \quad 0<c_1<c_2\leq 1, 
\]
to flip the signs of $\lambda(p)=-1,$ if necessary, to end up with the function $\tilde{\lambda}(n),$ satisfying
\[\sum_{n\le N}\frac{\tilde{\lambda}(n)}{n}\asymp \frac{1}{N}.\]
We finally use a random part of the form  
\[
\sum_{\alpha N<p \leq \beta N} \frac{f(p)}{p}, \quad \text{with} \quad 0<\alpha<\beta\leq 1, 
\]
where we take $f(p)$ to be uniformly distributed $\pm 1$ random variables and apply Lemma \ref{l:key_local_lemma} to get ``close" to the deterministic value and  finish the proof.

In order to construct a suitable infinite sequence $N_i\to\infty,$ in all our results above, we perform  an inductive procedure in conjunction with analytic arguments based on Perron's formula to show that we can ``glue" well-separated scales together. We remark that all examples of multiplicative $f$ satisfying the conclusion of Theorem \ref{t:small_log_mean_mult_f} must necessarily be ``pretentious" to $\lambda(n)$ in a precise sense of Granville and Soundararajan; see recent work \cite{KM} for further details.\\

The remainder of the paper is organized as follows. In Section \ref{S:key_local}, we prove a key local Lemma \ref{l:key_local_lemma} and illustrate its power by reproving \cite[Corollary 15]{GTZ20}. We then turn to the proof of Theorem \ref{t:small_log_mean_mult_f}. In Section \ref{S:pf_thms}, we state useful bounds for smooth numbers and extract multiplicative structures from sets with large density by proving a key arithmetic Lemma \ref{l:multi_str_positive_density_set}. Finally, we prove Theorem \ref{t:postive_density_harmonic_series} and Theorem \ref{t:postive_upp_density_harmonic_series}.

    \section{Small scale distribution of random harmonic sums}\label{S:key_local}
We begin with the key technical lemma which we shall apply repeatedly in the sequel.
    \setcounter{lemma}{0} \setcounter{theorem}{0}
    \setcounter{equation}{0}
    \begin{lemma}\label{l:key_local_lemma}
       Let $\{a_n\}_{n=1}^{N}$ be independent and uniformly distributed random variables in $\{-1,1\}$. For $0<c<1$ and $\cA \subset [cN, N]$, we define the harmonic sum 
   $$ {\bf X}_{\cA}=\sum_{n \in \cA}\frac{a_n}{n}.
    $$ Assume that there exists a subset $\cB  \subset \cA$ with the following properties:
    
    \begin{enumerate}
        \item $|\cB| \geq N^{2/3+\epsilon}$, for some $0<\epsilon \leq 1/3;$
        \item There exist $k \ge 1$ and a set $R(\cB) \subset \N$ such that $$\Omega(R(\cB)):=\max_{n \in \cB}\Omega(n) \leq k$$ and any $b \in \cB$ can be written uniquely $b=rs$ with $r \in R(\cB)$. 
    \end{enumerate}
  Let  $C=\frac{3\log (|\cB|)- 2\log N}{\log N}>\epsilon$ and assume that 
     \begin{equation}\label{e:eta}
   \frac{1}{N} \geq \eta \geq \exp\left( -\left(\frac{C^{2k}}{100}\frac{|\cB|^3}{N^2} \right)^{\frac{1}{2k+1} } (\log N)^{\frac{2k}{2k+1}}\right).
    \end{equation} 
       Then for $x_0 \ll \frac{1}{N}$, we have  
    \[
    \mathbb{P}\big( |{\bf X}_{\cA}-x_0| \leq \eta \big) \gg \frac{N}{\sqrt{|\cA|}} \eta.
    \] In particular, there exists a sequence $\{a_n\}_{n=1}^{N}$ in $\{-1,1\}$ such that 
    \[  \left\vert \sum_{n \in \cA} \frac{a_n}{n} -x_0 \right\vert \leq \eta. \]
    \end{lemma}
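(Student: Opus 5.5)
The plan is to use Fourier analysis with a smoothing kernel whose Fourier transform has compact support. I fix a nonnegative even function $\phi\in L^1(\R)$ with $\phi\ge c_1>0$ on $[-1/2,1/2]$, $\phi\le 1_{[-1,1]}$ up to a constant, and $\hat\phi$ supported in $[-1,1]$. A Fejér-type kernel $\phi=(\text{triangle})\ast$ is too rough for this, so I take $\phi=\psi\ast\tilde\psi$ with $\psi$ a Beurling--Selberg-type minorant of $1_{[-1/2,1/2]}$.

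Then
\[
\mathbb P(|{\bf X}_\cA-x_0|\le \eta)\gg \mathbb E\,\phi\!\Big(\frac{{\bf X}_\cA-x_0}{\eta}\Big)=\eta\int_{|t|\le 1/\eta}\hat\phi(\eta t)\,e(-tx_0)\,\hat\rho_\cA(t)\,dt ,
\]
where $\hat\rho_\cA(t)=\prod_{n\in\cA}\cos(2\pi t/n)$ is the characteristic function of ${\bf X}_\cA$ (with the normalisation $e(x)=e^{2\pi i x}$). I split the $t$-range into $|t|\le \kappa N$ (with $\kappa$ small) and $\kappa N<|t|\le 1/\eta$.

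\emph{Main term, $|t|\le\kappa N$.} Since $n\ge cN$, each $2\pi t/n$ is small, and $\log\cos x=-x^2/2+O(x^4)$ gives
\[
\hat\rho_\cA(t)=\exp\big(-2\pi^2\sigma^2t^2(1+O(t^2/N^2))\big),\qquad \sigma^2=\sum_{n\in\cA}n^{-2}\asymp |\cA|/N^2 .
\]
For $|t|\le\kappa N$ we have $\hat\phi(\eta t)=\hat\phi(0)+O(\eta N\kappa)$ because $\eta\le 1/N$. Also $x_0t\ll \kappa$ on the region $|t|\ll 1/\sigma$ that carries the mass, since $1/\sigma\asymp N/\sqrt{|\cA|}\le N$ and $x_0\ll 1/N$. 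So this range contributes $\asymp \eta/\sigma\asymp \eta N/\sqrt{|\cA|}$. The Gaussian tail for $1/\sigma\ll|t|\le\kappa N$ is negligible because $\sigma\kappa N\ge\kappa\sqrt{|\cB|}\to\infty$.

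\emph{Middle range, $\kappa N<|t|\le 1/\eta$.} It suffices to show $|\hat\rho_\cA(t)|\le t^{-2}$, since then this range contributes $O(\eta/N)=o(\eta N/\sqrt{|\cA|})$. I discard $\cA\setminus\cB$ and use
\[
|\cos(2\pi t/b)|\le\exp\big(-c\|2t/b\|^2\big),
\]
where $\|\cdot\|$ denotes the distance to the nearest integer. This reduces the task to the lower bound
\[
S(t):=\sum_{b\in\cB}\|2t/b\|^2\ \ge\ 3\log(1/\eta)\qquad\text{for }\kappa N<t\le 1/\eta .
\]
This is the main obstacle, and it is where property (2) and the exponent $1/(2k+1)$ enter. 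I argue by contradiction. If $S(t)$ is small, then all but few $b$ satisfy $\|2t/b\|\le\delta$. Group $b=rs$ by $r\in R(\cB)$; a popular $r$ then has many $s$, say $\gg|\cB|/|R|$ of them, in an interval $\asymp N/r$, with $\tau/s$ near integers, where $\tau=2t/r$. Taking differences between nearby admissible $s,s'$ gives
\[
\frac{\tau}{s}-\frac{\tau}{s'}=\frac{\tau(s'-s)}{ss'} ,
\]
which must be near an integer. This forces $\tau$ to lie near a rational with controlled denominator, and iterating this up to $k$ times (once per prime factor in $r$, using $\Omega\le k$ and uniqueness of the decomposition) pins $t$ down to a set whose measure is too small. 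Quantifying each iteration costs a factor of roughly $|\cB|^3/N^2$ versus $(\log(1/\eta))^{2}$, together with a factor $C$ coming from $|\cB|=N^{(2+C)/3}$. Balancing the $2k+1$ losses should produce exactly the threshold \eqref{e:eta}. I expect the bookkeeping of this differencing argument to be the delicate part; the first thing I would try is the case $k=1$, pure van der Corput-style second differences in $s$, and then induct on $k$.

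Combining the two ranges gives $\mathbb P(|{\bf X}_\cA-x_0|\le\eta)\gg \eta N/\sqrt{|\cA|}>0$. The existence of a suitable sign sequence $\{a_n\}_{n=1}^{N}$ follows immediately.
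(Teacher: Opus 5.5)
\paragraph{The central estimate is missing.} Your analytic frame is sound, and your treatment of $|t|\le\kappa N$ matches the paper's first two ranges. Replacing the paper's smooth bump, whose transform only decays like $1/(\eta t^2)$ and so forces a separate tail range $t>T$, by a kernel with $\operatorname{supp}\widehat\phi\subset[-1,1]$ is a legitimate simplification. But you need a minorant $\phi\le\mathbf{1}_{[-1,1]}$ (necessarily taking negative values) with $\widehat\phi(0)>0$, e.g. Selberg's. A nonnegative $\phi\ll\mathbf{1}_{[-1,1]}$ with compactly supported transform does not exist.

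The genuine gap is the step you flag yourself: the bound $|\rho_\cA(t)|\le t^{-2}$ for $\kappa N<t\le 1/\eta$, which is the whole content of the lemma. Your sketch cannot supply it, because it needs a ``popular'' $r\in R(\cB)$ carrying many $s$. Hypothesis (2) has to be read with $b\mapsto r$ injective. That is how the paper's proof uses it, and in the application $\cB=\{rs_r\}$ has exactly one element per rough part $r$. Without injectivity one could take $R(\cB)=\{1\}$, making (2) vacuous, and the lemma would be false. Take $\cA=\cB$ to be the $N^{1/10}$-smooth integers in $[cN,N]$ and $k=1$. Every signed sum then lies in $L^{-1}\Z$ with $\log L\ll N^{1/10}$, so $x_0=1/(2L)$ cannot be approached within an $\eta$ of size $\exp(-cN^{1/3}(\log N)^{2/3})$, which \eqref{e:eta} permits. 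So each $r$ has a single $s$, and your differences $\tau/s-\tau/s'$ have nothing to act on. ``Iterating once per prime factor of $r$'' is not an estimate, and the exponent $1/(2k+1)$ is read off from the statement rather than derived.

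\paragraph{The missing idea is a divisor count.} If $\|2t/b\|\le\delta$, then $b$ divides an integer $m$ with $|m-2t|\le\delta b\le\delta N$. Since $r(b)\mid b\mid m$, the map $b\mapsto r(b)$ is injective, and $\Omega(r)\le k$, you get
\[
\#\{b\in\cB:\ \|2t/b\|\le\delta\}\le\sum_{|m-2t|\le\delta N}\#\{r\in R(\cB):\ r\mid m\}\le\sum_{|m-2t|\le\delta N}\ \sum_{0\le j\le k}\omega(m)^j\ll k\,\delta N\Big(\frac{\log t}{\log\log t}\Big)^{k}.
\]
Choosing $\delta\asymp\frac{|\cB|}{kN}\big(\frac{\log\log t}{\log t}\big)^{k}$ leaves at least $|\cB|/2$ elements with $\|2t/b\|>\delta$. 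Hence $|\rho_\cA(t)|\le\exp(-c\,\delta^2|\cB|)$, and the requirement $\delta^2|\cB|\gg\log t$ becomes $(\log t)^{2k+1}\ll_k(\log\log t)^{2k}|\cB|^3/N^2$. Near the top of the range $\log\log t\asymp_k C\log N$. This is where the exponent $1/(2k+1)$ and the factor $C^{2k}(\log N)^{2k}$ in \eqref{e:eta} come from.

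This is the paper's treatment of $T_0\le t\le T$. Your normalisation $\|2t/b\|$ is actually the correct one there, since $|\cos 2\pi\theta|$ has period $1/2$. Inserted into your compactly supported kernel, this would complete your argument up to the bookkeeping of absolute constants, and the paper's separate range $t>T$ would no longer be needed.
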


    \begin{proof}
    Let $\phi(x)$ be a smooth even function supported on $[-1, 1]$ such that $\phi(x)=1$ when $x \in [-1/2, 1/2]$, $0 \leq \phi(x) \leq 1$ and satisfying $\phi^{(j)}(x) \ll_j 1$ for all $j \geq 1$. 
    We set
    \[
    \psi(x): = \phi(x/\eta),
    \] which is an even function and verifies $ \psi^{(j)}(x) \ll_j \frac{1}{\eta^j}.$
   We define the Fourier transform of $\psi$ by
    \[
    \widehat{\psi}(t):=\int_{\R} \psi(x) e^{-2i\pi xt} dx,
    \]
    which satisfies the following properties:
    \begin{enumerate}
        \item 
        \[
        \widehat{\psi}(t) = \int_{-\eta}^{\eta}\psi(x) e^{-2i \pi t x}dx = 2 \int_{0}^{\eta}\psi(x) \cos{(tx)} dx \in \R,
        \]
        and 
        \begin{equation}\label{e:low_hat_psi}
         \widehat{\psi}(t)> \frac{9}{10} \eta \text{ when } |t| \leq \frac{1}{100 \eta};
        \end{equation}
        \item Applying integration by parts to $\widehat{\psi}(t)$, one can obtain that for all $j \geq 0$ and $t \in \R$,
        \[
        \widehat{\psi}(t) \ll_j \frac{1}{\eta^{j-1}|t|^j} .
        \]
        In particular,
        \begin{equation}\label{e:upbd_hat_psi}
        \widehat{\psi}(t) \ll \min \left\{ \eta, \frac{1}{\eta |t|^2} \right\}. 
        \end{equation}
        
    \end{enumerate}
Let us define the even function $$\rho_{\cA}(t):=\mathbb{E}[e^{2i \pi t{\bf X}_{\cA}}]= \prod_{n \in \cA} \cos{\left(\frac{2\pi t}{n} \right)}.$$ Using Fourier inversion and the parity of $\widehat{\psi}$ we have 
\begin{align}\label{e:prob_int}
\mathbb{P}\big( |{\bf X}_{\cA}-x_0|\leq \eta \big)& = \mathbb{E}(\mathbf{1}_{\vert {\bf X}_{\cA}-x_0 \vert \leq \eta}) \geq  \mathbb{E}(\psi({\bf X}_{\cA}-x_0)) \nonumber \\
& =\frac{1}{2^{\vert\cA \vert }} \sum_{\delta_1, \dots, \delta_{\vert \cA\vert} \in \left\{\pm 1\right\}} \psi\left(\sum_{n \in \cA} \frac{\delta_n}{n} - x_0  \right)  \nonumber  \\ 
& =  \int_{\mathbb{R}} \widehat{\psi(t)} \rho_{\cA}(t) e^{-2i\pi tx_0} dt  = 2\int_{0}^{+\infty} \cos{ (2 \pi t x_0)}\widehat{\psi}(t)  \rho_{\cA}(t)  dt.
\end{align}

    
  It remains to lower bound the last integral in \eqref{e:prob_int}. We split the domain of integration into four parts.  If $0 \leq |t| \leq \frac{N}{\sqrt{|\cA|}}$, we have 
    \begin{align*}
    &\rho_{\cA}(t)= \prod_{n \in \cA} \cos{\left(\frac{2 \pi t}{n} \right)} = \prod_{n \in \cA} \left( 1 - \frac{2\pi t^2}{n^2} + O\left(\frac{t^4}{n^4} \right)\right)\\
    & = \exp\left({-\sum_{n \in \cA} \left(\frac{2 \pi t^2}{n^2} + O\left(\frac{t^4}{n^4} \right) \right)} \right).
    \end{align*}
    Note that
    \[
    \sum_{n \in \cA}\frac{t^2}{n^2} \leq \frac{N^2}{|\cA|}\frac{1}{c^2N^2}|\cA| = \frac{1}{c^2} \quad \text{and} \quad \sum_{n \in \cA}\frac{t^4}{n^4} \ll \frac{1}{|\cA|},
    \]
    so $$\rho_{\cA}(t) \geq e^{-\frac{2\pi}{c^2}+o(1)} \gg 1.$$ By (\ref{e:low_hat_psi}) and using $\frac{N}{\sqrt{|\cA|}} \leq \frac{1}{100 x_0}$, we get
     \[
    \int_{0 \leq |t| \leq \frac{N}{\sqrt{|\cA|}}} \cos{( 2\pi t x_0)} \widehat{\psi}(t) \rho_{\cA}(t) dt \gg \frac{N}{\sqrt{|\cA|}} \eta.
    \]
   If $$\frac{N}{\sqrt{|\cA|}} < t < T_0:=\min \{ \frac{1}{4x_0}, \frac{1}{4}cN\},$$ we notice that $\cos{(2 \pi t x_0)}>0$ and $\rho_{\cA}(t)>0$. Moreover using (\ref{e:low_hat_psi}), we obtain
    \[
    \int_{\frac{N}{\sqrt{\cA}} < t < T_0}  \cos{(2\pi tx_0)}\widehat{\psi}(t) \rho_{\cA}(t) dt >0.
    \]
    Now, we study the case when 
    \[
    T_0 \leq  t\leq T:= \exp\left( \left(\frac{C^{2k}}{32}\frac{|\cB|^3}{N^2} \right)^{\frac{1}{2k+1} } (\log N)^{\frac{2k}{2k+1}}\right).
    \]
    Note that \begin{equation}\label{cos_bdd}
    \vert \cos{(2 \pi \theta)} \vert \leq \exp{\left(-2\pi^2\|\theta\|^2 \right)} \quad \text{and} \quad |\rho_{\cA}(t)|= \left|\prod_{n \in \cA} \cos{\left(\frac{2\pi t}{n} \right)}\right|\leq \prod_{n \in \cB} \left|\cos{\left(\frac{2 \pi t}{n} \right)}\right|
    \end{equation} where $\|x\|$ denotes the distance of $x \in \mathbb{R}$ from its nearest integer. Thus we have 
    \begin{align*}
    \vert \rho_{\cA}(t) \vert & \leq \exp{ \left( -2 \pi^2 \sum_{n \in \cB} \left\|\frac{t}{n} \right\|^2\right)}   \leq \exp{ \left( -2(\pi\delta)^2 \sum_{n \in \cB} {\bf 1}_{\left\|\frac{t}{n} \right\|> \delta}\right)}  \\
    & \leq \exp{ \left( -2(\pi\delta)^2 \left(|\cB|- |S_{\cB,\delta}(t)| \right)\right)},
    \end{align*}
    where $\delta$ is a parameter to be chosen and
    \[
   S_{\cB,\delta}(t)= \{n \in \cB: \| t/n \| \leq \delta \}.
    \] We argue similarly as in \cite[Lemma $3.3$]{BMS18} to bound $S_{\cB,\delta}(t).$
    \begin{align*}
    |S_{\cB,\delta}(t)|  &= \#\{n \in \cB: \exists l \in \Z:l-\delta < t/n < l+\delta\}
    \\
    &=\#\{n \in \cB: \exists l \in \Z:t-\delta n< ln < t + \delta n\}\\
     & \leq \#\{n \in \cB: \exists l \in \Z:t-\delta N< ln < t + \delta N\}\\
     & = \sum_{n \in \cB} \sum_{\substack{t -\delta N< m < t + \delta N \\ n \mid m}} 1= \sum_{t -\delta N< m < t + \delta N }\sum_{\substack{n \mid m \\ n \in \cB}}  1.
    \end{align*}
    By the definition of $R(\cB)$, we can bound the above by 
    \[
    \sum_{t -\delta N< m < t + \delta N }\sum_{\substack{n \mid m \\ n \in R(\cB)}}  1 \leq \sum_{j=1}^{k} \sum_{t -\delta N< m < t + \delta N } \left(\sum_{p \mid m}1 \right)^j \leq k\sum_{t -\delta N< m < t + \delta N } \omega(m)^k,
    \]
    where $\omega(m)$ denotes the number of distinct number of prime factors of $m$. By the known bound $\omega(m) \leq \frac{\log m}{\log \log m}$, we finally obtain that 
    \[
    |  S_{\cB,\delta}(t)| \leq 2k\delta N \left( \frac{\log t}{\log \log t} \right)^k,
    \]
    since $t \gg N$.

    By choosing $$\delta = \frac{1}{4k} \left(\frac{\log \log t}{\log t} \right)^k \frac{|\cB|}{N},$$  we have
    \[
    |  S_{\cB,\delta}(t)| \leq \frac{1}{2} |\cB|, 
    \]
    which implies that, after an easy numerical calculation, when $T_0 < t \leq T$,
    \[
   |\rho_{\cA}(t)| \leq \exp{ \left( -\pi^2 \delta^2|\cB|\right)} \leq \frac{1}{t^2}.
    \]
    Hence,
    \[
    \int_{T_0<t \leq T}|\widehat{\psi}(t)\rho_{\cA}(t)|dt \ll \eta \int_{T_0 <t \leq T} \frac{1}{t^2} dt = o(\eta).
    \]
    If $ t >T$, by (\ref{e:eta}) and (\ref{e:upbd_hat_psi}), then
    \[
    \int_{t > T }|\widehat{\psi}(t)| dt \leq \frac{1}{\eta}\int_{t > T }\frac{1}{t^2} dt = o(\eta).
    \]
    \end{proof}
    
    \begin{Rem}
    Choosing $x_0$ to be small enough is the key to ensuring  positivity in the range $\frac{N}{\sqrt{\cA}} \leq t \ll N$. 
    \end{Rem}

\section{Application to signed harmonic sums and proof of Theorem \ref{t:small_log_mean_mult_f}}
Before moving on to the proof of Theorem \ref{t:small_log_mean_mult_f}, we illustrate how to apply Lemma \ref{l:key_local_lemma} to obtain strong results for sets $\cA \subset (N,2N)$ of zero asymptotic density.

\begin{lemma}\label{typical-nb-factors}
Let $\cA \subset (N,2N)$ such that $|\cA| \gg \frac{N}{(\log \log N)^C}$ for some constant $C>0.$ There exists a subset $\mathcal{B}$ of $\cA$ such that $|\mathcal{B}| \geq \frac{|\cA|}{2}$ and 
$$ \Omega(\mathcal{B}):= \max_{n \in \mathcal{B}} \Omega(n) \leq 2 \log \log N.$$

\end{lemma}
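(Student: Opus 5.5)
We take $\mathcal{B} = \{n \in \cA : \Omega(n) \le 2\log\log N\}$. It then suffices to show that the complementary set
\[
\mathcal{E}=\{n \in (N,2N) : \Omega(n) > 2\log\log N\}
\]
has size $o\!\left(N/(\log\log N)^C\right)$ for every fixed $C$. Once this holds, $|\cA\setminus\mathcal{B}|\le|\mathcal{E}|\le |\cA|/2$ for $N$ large, and hence $|\mathcal{B}|\ge|\cA|/2$.

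To bound $|\mathcal{E}|$ we use the standard Rankin-type device with the multiplicative weight $z^{\Omega(n)}$ for a fixed $1<z<2$. The Hardy--Ramanujan / Selberg--Delange estimate gives
\[
\sum_{n\le 2N} z^{\Omega(n)} \ll_z N(\log N)^{z-1}
\]
for $z<2$. Everything then reduces to this weighted bound, which is the standard input and so the only real obstacle. If one wants to avoid quoting Selberg--Delange, an elementary proof works: write $z^{\Omega}=1*g$ with $g$ supported on powers of primes, then bound by $N\sum_{n\le 2N} g(n)/n\le N\prod_{p\le 2N}(1-z/p)^{-1}(1-1/p)$, which is $\ll N(\log N)^{z-1}$ by Mertens since $z<2$.

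With the weighted bound in hand, Markov's inequality gives
\[
|\mathcal{E}| \le z^{-2\log\log N}\sum_{n\le 2N} z^{\Omega(n)} \ll N(\log N)^{z-1-2\log z}.
\]
The exponent $z-1-2\log z$ is negative for any $1<z<2$. For example, $z=3/2$ gives $1/2-2\log(3/2)\approx -0.31$. Therefore
\[
|\mathcal{E}|\ll N(\log N)^{-0.3},
\]
which is certainly $o\!\left(N/(\log\log N)^{C}\right)$. This proves the lemma.
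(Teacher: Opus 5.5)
Your proof is correct, and it takes a different route from the paper. The paper uses a Chebyshev-type inequality with high even moments, citing Granville--Soundararajan. Since $\Omega(n)\ge 2\log\log N$ forces $\bigl(\Omega(n)-\log\log N\bigr)/\log\log N\ge 1$, one gets $|\mathcal{E}|\le\sum_{n\le N}\bigl(\frac{\Omega(n)-\log\log N}{\log\log N}\bigr)^{2k}\ll_k N/(\log\log N)^{k}$, and taking $k>C$ finishes.

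You instead use an exponential moment (Rankin's trick) with the weight $z^{\Omega(n)}$, $1<z<2$. This needs only Mertens' theorem and gives the much stronger saving $|\mathcal{E}|\ll N(\log N)^{z-1-2\log z}$. That is a saving of any power $(\log N)^{-a}$ with $a<2\log 2-1\approx 0.386$, rather than merely a power of $\log\log N$.

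The paper's route is a one-line citation matched exactly to the hypothesis $|\cA|\gg N/(\log\log N)^C$, with arbitrary $C$ absorbed by choosing $k$ large. Yours is self-contained and shows the hypothesis could be relaxed to $|\cA|\gg N/(\log N)^{c}$ for any $c<2\log 2-1$. With threshold $K\log\log N$ in place of $2\log\log N$ ($K$ large in terms of $C$), it even handles $|\cA|\gg N/(\log N)^{C}$ for any fixed $C$ while keeping $\Omega(\mathcal{B})\ll\log\log N$. This is relevant to the Remark following Corollary~\ref{cor:loglog}.

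One small wording slip: $g=\mu*z^{\Omega}$ is multiplicative with $g(p^k)=(z-1)z^{k-1}\ge 0$, so it is not supported on prime powers. The Euler product $\prod_{p\le 2N}(1-1/p)(1-z/p)^{-1}$ you write is nonetheless exactly the right bound for $\sum_{d\le 2N}g(d)/d$, so the argument is unaffected.
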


\begin{proof}
By \cite[Theorem $1$]{GSE07}, we have
$$ \sum_{n \leq N \atop \Omega(n) \geq 2 \log \log N}1 \leq \sum_{n \leq N} \left(\frac{\Omega(n)-\log \log N}{\log \log N}\right)^{2k} \ll_k \frac{N}{(\log \log N)^k}.$$ The result follows taking $k>C.$
\end{proof}

 \begin{corollary}\label{cor:loglog}
     Let $\cA \subset (N,2N)$  such that $$|\cA| \gg \frac{N}{(\log \log N)^C}$$ for some constant $C>0.$  There exist a constant $c>0$ and a sequence $\{a_n\}_{n=1}^{N}$ in $\{-1,1\}$ such that 
    \begin{equation}\label{weakerbnd}  \left\vert \sum_{n \in \cA} \frac{a_n}{n}  \right\vert \leq \exp\left(-\exp\left(c\frac{\log N}{\log \log N}\right)\right). \end{equation}
 \end{corollary}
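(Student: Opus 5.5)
We apply Lemma \ref{l:key_local_lemma} with $x_0=0$, after rescaling so that the lemma's interval matches $(N,2N)$. Concretely, replace $N$ by $N'=2N$ and take $c=1/2$, so that $\cA\subset[cN',N']$; all logarithms change only by constants. (The sequence $\{a_n\}$ is then indexed up to $2N$; entries outside $\cA$ are irrelevant and can be set to $1$.) For the auxiliary set we use Lemma \ref{typical-nb-factors}. It gives $\cB\subset\cA$ with $|\cB|\ge |\cA|/2\gg N/(\log\log N)^C$ and $\Omega(n)\le 2\log\log N$ for all $n\in\cB$.

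For hypothesis (2) of Lemma \ref{l:key_local_lemma}, the trivial choice $R(\cB)=\cB$ with $s=1$ suffices. What the proof of the lemma really uses is that each $n\in\cB$ dividing $m$ is a product of at most $k$ primes dividing $m$ (with multiplicity). So we may take $k=\lfloor 2\log\log N\rfloor$. Hypothesis (1) holds because $|\cB|\gg N/(\log\log N)^C\ge N^{2/3+\epsilon}$ for any fixed $\epsilon<1/3$ and large $N$. Moreover,
\[
C_{\rm lem}=\frac{3\log|\cB|-2\log N}{\log N}=1-O\!\left(\frac{\log\log\log N}{\log N}\right),
\]
so $C_{\rm lem}^{2k}=\exp(-O(\log\log N\cdot\log\log\log N/\log N))\asymp 1$.

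Next we evaluate the admissible $\eta$ in \eqref{e:eta}. We have $|\cB|^3/N^2\gg N/(\log\log N)^{3C}$, so
\[
\left(\frac{C_{\rm lem}^{2k}}{100}\frac{|\cB|^3}{N^2}\right)^{\frac{1}{2k+1}}\ge \exp\!\left(\frac{\log N-3C\log\log\log N-O(1)}{2k+1}\right)\ge \exp\!\left(\frac{\log N}{5\log\log N}\right)
\]
for large $N$, since $2k+1\le 4\log\log N+1$. The factor $(\log N)^{2k/(2k+1)}\ge 1$ only helps. Hence we may choose
\[
\eta=\exp\!\left(-\exp\!\left(c\frac{\log N}{\log\log N}\right)\right)
\]
with $c=1/5$, or slightly smaller to absorb constants. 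This $\eta$ is at most $1/N$, and it is at least the lower bound in \eqref{e:eta}. Lemma \ref{l:key_local_lemma} then yields positive probability, and hence a sequence with $|\sum_{n\in\cA}a_n/n|\le\eta$, which is \eqref{weakerbnd}.

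The main point of care is the dependence of $k$ on $N$. The lemma is stated for $k\ge1$, and we must check that its proof tolerates $k\asymp\log\log N$. In the lemma's proof, $\delta=\frac{1}{4k}(\log\log t/\log t)^k|\cB|/N$, and one needs $\exp(-\pi^2\delta^2|\cB|)\le t^{-2}$ for $t\le T$. I would recheck this computation with $k$ growing: the factor $1/(4k)^2$ costs only $\exp(O(\log\log\log N))$, which the exponent $1/(2k+1)$ absorbs. I would also check that the implicit constants (such as the $2k\delta N$ bound on $|S_{\cB,\delta}(t)|$) are uniform in $k$. This is plausible because the lemma's constants were written explicitly in $k$.
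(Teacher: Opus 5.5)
Your proposal is correct and follows the paper's proof: take $\cB$ from Lemma \ref{typical-nb-factors}, apply Lemma \ref{l:key_local_lemma} with $k\asymp\log\log N$ (you do this via $R(\cB)=\cB$), and compute the admissible $\eta$, which the paper leaves as ``a simple computation.'' The uniformity in $k$ that you flag, and that the paper does not mention, does hold: every $k$-dependent loss in the lemma's proof enters through a $(2k+1)$-th root and costs at most a factor $O(k)$ in $\log T$, which is negligible against $N^{1/(2k+1)}$, so since you already discard the $(\log N)^{2k/(2k+1)}$ factor, your choice $c=1/5$ stands.
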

\begin{proof}
    Taking $\cB$ as in Lemma \ref{typical-nb-factors}, the result follows from an application of Lemma \ref{l:key_local_lemma} and a simple computation.
\end{proof}
\begin{Rem}
We could allow $\cA$ to have a smaller density choosing a subset $\cB \subset \cA$ satisfying $\Omega(\cB) \leq (\log N)^{\epsilon}.$ For instance, we could replace the condition in Corollary \ref{cor:loglog}
by $|\cA| \gg \frac{N}{(\log  N)^C}$ replacing the right hand side of \eqref{weakerbnd} by the weaker bound
$\exp\left(-\exp\left((\log N)^{1-\varepsilon}\right)\right)$.

\end{Rem}

We now introduce a simple ``flipping trick'', which will be applied throughout the paper. 
    \begin{lemma}[Flipping trick]\label{l:flip}
    Given a set $S \in [c_1N,c_2N]$ with $0<c_1<c_2$ and $\alpha \in \R$, such that 
    \[
    \sum_{n \in S} \frac{1}{n} > |\alpha|,
    \]
    there exists a sequence $\{b_n\}_{n \in S}$ in $\{-1,1\}$ such that 
    \[
   \sum_{n \in S}\frac{b_n}{n} -\alpha  \leq \frac{1}{c_1 N}.
    \]
    \end{lemma}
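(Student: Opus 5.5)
The plan is a discrete intermediate value argument. I will prove the stronger two-sided bound $\bigl|\sum_{n\in S} b_n/n-\alpha\bigr|\le \frac{1}{c_1N}$, which in particular gives the one-sided inequality in the statement. Write $S=\{n_1<n_2<\dots<n_m\}$ and $H:=\sum_{n\in S}\frac1n$, so that $H>|\alpha|$ by hypothesis. For $0\le j\le m$ define the sign pattern $b^{(j)}$ by $b^{(j)}_{n_i}=+1$ for $i\le j$ and $b^{(j)}_{n_i}=-1$ for $i>j$. Set
\[
s_j:=\sum_{n\in S}\frac{b^{(j)}_n}{n}.
\]

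Two facts drive the argument. The endpoints are $s_0=-H<-|\alpha|\le\alpha$ and $s_m=H>|\alpha|\ge \alpha$. Each step flips exactly one sign from $-1$ to $+1$, so
\[
s_j-s_{j-1}=\frac{2}{n_j}\le \frac{2}{c_1N},
\]
because every element of $S$ lies in $[c_1N,c_2N]$. Hence $(s_j)$ is increasing with increments at most $2/(c_1N)$.

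Let $j^*\ge1$ be the least index with $s_{j^*}\ge\alpha$. It exists because $s_m>\alpha$, and $j^*\ge 1$ because $s_0<\alpha$. By minimality, $s_{j^*-1}<\alpha\le s_{j^*}$, so $\alpha$ lies in an interval of length at most $2/(c_1N)$ whose endpoints are $s_{j^*-1}$ and $s_{j^*}$. One of these endpoints is therefore within $1/(c_1N)$ of $\alpha$. Taking $b=b^{(j^*)}$ or $b=b^{(j^*-1)}$ accordingly gives the two-sided bound. If only the one-sided version is wanted, $b^{(j^*-1)}$ alone already satisfies $s_{j^*-1}-\alpha<0\le \frac{1}{c_1N}$.

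There is no real obstacle here. The only points needing care are the endpoint comparisons, which use $H>|\alpha|$ on both sides, and the observation that the increment bound $2/n\le 2/(c_1N)$ uses only the lower endpoint $c_1N$ of the range; the upper bound $c_2N$ plays no role.
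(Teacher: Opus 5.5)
Your proof is correct. It gives the two-sided bound $\bigl|\sum_{n\in S}b_n/n-\alpha\bigr|\le \frac{1}{c_1N}$, which is the form the paper actually uses later (e.g.\ to get $E+\sum_{N_0/2<p\le N_0}f(p)/p\ll 1/N_0$). You are also right that the one-sided inequality as literally stated is trivial.

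Your route differs somewhat from the paper's. The paper first assumes $\alpha>0$. It puts $+1$ on $n_1,n_2,\dots$ up to the first index $j_0$ at which the partial sum exceeds $\alpha$, so the error lies in $(0,1/n_{j_0}]$. It then processes the remaining elements greedily, choosing each sign to move the running sum toward $\alpha$. Since each step has size at most $\frac{1}{c_1N}$, the error stays in $[-\frac{1}{c_1N},\frac{1}{c_1N}]$.

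You instead walk along the monotone chain of sign patterns from all $-1$ to all $+1$ and apply a discrete intermediate value argument with steps $2/n_j\le 2/(c_1N)$. Your version is fully explicit, whereas the paper's ``greedy algorithm'' step is only sketched. It also uses $\sum_{n\in S}1/n>|\alpha|$ symmetrically, with no reduction to $\alpha>0$.

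What the greedy version buys is an online property. Once the target is first reached, every later partial sum (taking $n$ in increasing order) stays within $\frac{1}{c_1N}$ of $\alpha$. Your pattern, $+1$ on an initial block and $-1$ afterwards, lacks this: the running sum at $n_{j^*}$ is about $\alpha+\sum_{i>j^*}1/n_i$. That property is irrelevant for the lemma as stated. However, control of this kind is what the proof of Lemma \ref{l:deter_low_upp_large_bdd}(1) relies on, since there the sum over $\cA\cap[N]$ must be small for every $N>M$, not only at the checkpoints $N_j$.
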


\begin{proof}
Let $S=\{n_1,\dots,n_k\}$ and assume without loss of generality that $\alpha>0$. Let $$j_0:=\min\{1\leq j \leq k, \sum_{i=1}^{j}\frac{1}{n_i}>\alpha\}.$$  Clearly $$0< \sum_{i=1}^{j_0} \frac{1}{n_i} -\alpha \leq \frac{1}{n_{j_0}} \leq \frac{1}{c_1 N}.$$ We then use the greedy algorithm to ensure that the error decreases at each step.
\end{proof}


     We will show that ``global information'' can be obtained from ``local information'' by proving the following corollary. 
    \begin{corollary}(\cite[Corollary 15]{GTZ20})\label{c:small_harmonic_series}
    Let $N$ be a sufficiently large integer. There exists a sequence $(a_n)_{1 \leq n \leq N} \in \{-1,1\}$ such that 
    $$ 
    \sum_{n=1}^{N}\frac{a_n}{n} \ll \exp{\left(- \frac{1}{1000}\frac{N^{1/3}}{(\log N)^{1/3}} \right)}.
    $$
    \end{corollary}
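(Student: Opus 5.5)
We split $[1,N]$ into a deterministic part and a random top part, $[1,cN)$ and $[cN,N]$ with a fixed $c\in(0,1)$, say $c=1/2$. The random part will be handled by Lemma \ref{l:key_local_lemma} with $\cA=[cN,N]$, and the deterministic part by the flipping trick, Lemma \ref{l:flip}.

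\textbf{Step 1: the lower part.} Choose signs $a_n$ for $n<cN/2$ arbitrarily, for instance $a_n=(-1)^{n+1}$, so that the partial sum over this range is $O(1)$. Then apply Lemma \ref{l:flip} to $S=[cN/2,cN)$, whose harmonic mass is $\log 2>0$, taking $\alpha$ to be minus the sum over the earlier range. To have $\sum_{n\in S}1/n>|\alpha|$, we need $|\alpha|<\log 2$. The alternating choice gives $|\alpha|$ close to $\log 2$, which is too large, so instead we choose the signs on $n<cN/2$ greedily, keeping the running sum in $[-1/n,1/n]$; this makes $|\alpha|\ll 1/N$. The output is a deterministic value
\[
x_0:=\sum_{n<cN}\frac{a_n}{n}, \qquad |x_0|\ll \frac{1}{N}.
\]
In fact the greedy choice alone, carried through all $n<cN$, already gives $|x_0|\le 1/(cN)$.

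\textbf{Step 2: the random top part.} Set $\cA=[cN,N]$ and take $\cB$ to be the set of primes in $[cN,N]$. Then $|\cB|\gg N/\log N$, which certainly exceeds $N^{2/3+\epsilon}$. Every $b\in\cB$ has $\Omega(b)=1$, so we may take $k=1$ and $R(\cB)=\cB$, with each $b$ written uniquely as $b=b\cdot 1$. The constant
\[
C=\frac{3\log|\cB|-2\log N}{\log N}
\]
tends to $1$. Plugging into \eqref{e:eta}, the admissible $\eta$ can be as small as
\[
\exp\left(-\Big(\frac{C^2}{100}\,\frac{N^3}{(\log N)^3 N^2}\Big)^{1/3}(\log N)^{2/3}\right)
=\exp\left(-\frac{C^{2/3}}{100^{1/3}}\,\frac{N^{1/3}}{(\log N)^{1/3}}\right).
\]
Since the primes in $[cN,N]$ number only about $(1-c)N/\log N$, this constant gets multiplied by a factor of size $(1-c)^{1/3}$ and the like. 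It is still far larger than $1/1000$ after fixing $c=1/2$.

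Now apply Lemma \ref{l:key_local_lemma} with target $-x_0$, which is admissible because $|x_0|\ll 1/N$. This yields signs on $[cN,N]$ with
\[
\left|\sum_{n\in[cN,N]}\frac{a_n}{n}+x_0\right|\le\eta.
\]
Adding the two parts gives the corollary.

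\textbf{The main obstacle.} The delicate point is checking that the hypotheses of Lemma \ref{l:key_local_lemma} are met with the correct constants. First, the lemma requires $\cB\subset\cA\subset[cN,N]$, and the unique factorisation condition is trivial for primes. Second, and more importantly, the requirement $x_0\ll 1/N$ must hold with a constant compatible with $N/\sqrt{|\cA|}\le 1/(100x_0)$. Here $N/\sqrt{|\cA|}\asymp\sqrt N$, while $1/(100x_0)\gg N$, so this is fine for large $N$. Finally, we keep track of the explicit numerical constant so that it is at least $1/1000$; a looser $\ll$ is acceptable, since the statement only has $\ll$ in front.
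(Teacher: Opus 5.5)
Your proposal is correct and follows essentially the paper's route: fix the signs on $n<N/2$ deterministically so that this partial sum is $O(1/N)$, then apply Lemma \ref{l:key_local_lemma} on $\cA=[N/2,N]$ with $\cB=R(\cB)$ the primes there and $k=1$, targeting minus the lower sum. The differences are minor. You obtain the lower part by a pure greedy choice, which does keep $|S_n|\le 1/n$, whereas the paper uses alternating signs up to about $N/(2e^2)$ followed by Lemma \ref{l:flip}. Also, the prime-count factor enters the exponent as $(1-c)$ rather than $(1-c)^{1/3}$, which still leaves a constant near $0.1>1/1000$.
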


    \begin{proof}
    We first claim that there exists a sequence $a_n \in \{-1,1\}$ such that 
    \begin{equation}\label{e:constr_a_n_n_for_x_0}
    \sum_{1 \leq n \leq N/2}\frac{a_n}{n} \ll \frac{1}{N}.
    \end{equation}
    Take $a_{2j-1}=1$ and $a_{2j}=-1$ for $j \leq J=\lfloor N/4e^2 \rfloor$ and let 
    $$\alpha:=  \sum_{n=1}^{2J} \frac{a_n}{n}.$$ We have  
    \[
    \frac{1}{2}<\alpha=1-\frac{1}{2}+\frac{1}{3}-\frac{1}{4}+\cdots-\frac{1}{2J} \leq \sum_{n=1}^{\infty}\left(\frac{1}{n}-\frac{1}{n+1}\right)=1.
    \]
    Note that 
    \[
    \sum_{2J<n \leq N/2}\frac{1}{n} = \log (2e^2) - \log 2 + O\left(\frac{1}{N} \right) = 2 + O\left(\frac{1}{N} \right).
    \]
    By Lemma \ref{l:flip}, there exists a sequence $a_n\in \{+1,-1\}$ for $n \in (2J, N/2]$ such that
    \[
    \sum_{1 \leq n \leq N/2} \frac{a_n}{n} = \alpha + \sum_{2J<n \leq N/2}\frac{a_n}{n}  \ll \frac{1}{N}.
    \] 
We let
    $$ x_0 =-\sum_{1\leq n \leq N/2}\frac{a_n}{n},$$ by the prime number theorem, we can apply Lemma \ref{l:key_local_lemma} with the following choice of parameters:
    \[
 \cA=[N/2,N], \cB=R(\cB)=\{p\in \cA: p \text{ prime}\}, \text{ and } k=1.
    \]
  We obtain that there is a sequence $\{a_n\}_{N/2}^{N}$ in $\{-1,1\}$, such that 
    \[
    \left|\sum_{1 \leq n\leq N}\frac{a_n}{n} \right| = \left|\sum_{N/2<n \leq N}\frac{a_n}{n} + \sum_{1 \leq n \leq N/2} \frac{a_n}{n}\right| \leq \eta,
    \]
    where 
    \[
    \eta \geq \exp\left( - \frac{1}{1000} \left(\frac{N}{\log N} \right)^{1/3}\right).
    \]
    \end{proof}

    \begin{Rem}
    The idea of decomposing $[N]$ into two intervals to construct one deterministic part and one random part will also be applied to the proof of Theorem \ref{t:small_log_mean_mult_f}. Instead of decomposing the deterministic part by hand, one could have applied \cite[Theorem 1.1]{BMS18}, which shows that for $1<n \leq N/2$, there exists a sequence $a_n \in \{-1,1\}$ such that
    \[
    \sum_{1 \leq n \leq N/2} \frac{a_n}{n} \ll \exp\left( - (\log N)^2 \right).
    \]
    Taking $\alpha = -\sum_{1 \leq n \leq N/2} \frac{a_n}{n}$ in Lemma \ref{l:key_local_lemma}, Corollary \ref{c:small_harmonic_series} follows by the same arguments as above. 
    \end{Rem}

    We are now ready to prove Theorem \ref{t:small_log_mean_mult_f}.
    \begin{proof}[Proof of Theorem \ref{t:small_log_mean_mult_f}]
    We will construct by induction an increasing sequence $N_0<N_1<\dots$ and a multiplicative function $f$ taking values in  $\{-1,1\}$ such that  \[
    \left| \sum_{1 \leq n \leq N_i}\frac{f(n)}{n} \right| \ll \exp\left(-c_0 \frac{N_i^{1/3}}{(\log N_i)^{1/3}} \right).
    \] To do so, we judiciously modify the Liouville function on some well-chosen intervals (this idea already appears in \cite{GS07} in connection with negative truncation problem of $L(1,\chi_d)$). Let us start with the initial step. By Haselgrove's result \cite{Ha58}, we can consider the first integer $C>1$ such that
    \[
     -\Delta:=\sum_{1 \leq n \leq C} \frac{\lambda(n)}{n} <0.
    \]
    By a simple observation, $0 < \Delta <\frac{1}{C}$.
    For a sufficiently large $N_0$, we write
    \begin{align*}
    \sum_{1 \leq n \leq N_0} \frac{\lambda(n)}{n} & = \sum_{\frac{N_0}{2}<p \leq N_0} \frac{\lambda(p)}{p} + \sum_{\frac{N_0}{C+1}<p \leq \frac{N_0}{C}}\frac{\lambda(p)}{p} \sum_{1 \leq n \leq C} \frac{\lambda(n)}{n} + E\\
    & = \sum_{\frac{N_0}{2}<p \leq N_0} \frac{\lambda(p)}{p} - \Delta \sum_{\frac{N_0}{C+1}<p \leq \frac{N_0}{C}}\frac{\lambda(p)}{p}+E\\
    & = -\sum_{\frac{N_0}{2}<p \leq N_0} \frac{1}{p} + \Delta \sum_{\frac{N_0}{C+1}<p \leq \frac{N_0}{C}}\frac{1}{p} + E
    \end{align*}
    By the prime number theorem, we have 
    \[
    \sum_{ N_0/2 <p \leq N_0} \frac{1}{p} = \frac{\log2}{\log N_0} + O\left( \frac{1}{\log^2 N_0}\right) \quad \text{and} \quad \sum_{\frac{N_0}{C+1}<p \leq \frac{N_0}{C}}\frac{1}{p}=\frac{\log(1+1/C)}{\log N_0}+ O\left( \frac{1}{\log^2 N_0}\right).
    \]
    Since 
    \[
    \log 2 > \frac{1}{C} \log \left(1+\frac{1}{C} \right )>\Delta \log \left(1+\frac{1}{C} \right),
    \]
    we have, by the upper bound \eqref{e:Liouvill_pnt},
    \[
    E  = (\log 2 - \Delta \log (1+1/C)) \frac{1}{\log N_0} +  O\left( \frac{1}{(\log N_0)^2}\right).
    \]
    By Lemma \ref{l:flip}, we can find $f(p) \in \{-1,1\}$ for $p \in (N_0/2,N_0]$ such that  
    \[
    E+\sum_{N_0/2<p \leq N_0}\frac{f(p)}{p} \ll \frac{1}{N_0}.
    \]
   We now apply Lemma \ref{l:key_local_lemma} with 
      \[
    \cA=\cB=R(\cB)=\{p \in [N_0/(C+1),N_0/C]: p \text{ prime}\}
    \] and 
    \[
    -x_0 = \frac{1}{\Delta}\left(E+\sum_{N_0/2<p \leq N_0}\frac{f(p)}{p} \right).
    \] Thus, there is a sequence $\{f(p)\}_{p=N_0/(C+1)}^{N_0/C}$ in $\{-1,1\}$ and $c_0>0$ small, such that 
    \[
    \left|\sum_{p=N_0/(C+1)}^{N_0/C}\frac{f(p)}{p}-x_0 \right| \ll \exp\left( - c_0\left(\frac{N_0}{\log N_0} \right)^{1/3}\right).
    \] 
    Assume now that $f(p)$ has been defined for $p \leq N_k$ modifying the Liouville function $\lambda$ for primes $$p \in J_i:=[N_i/2, N_i]\cup[N_i/(C+1),N_i/C]$$ for $0 \leq i \leq k$ in order to have
    \[
    \left| \sum_{1 \leq n \leq N_i}\frac{f(n)}{n} \right| \ll \exp\left(-c_0 \frac{N_i^{1/3}}{(\log N_i)^{1/3}} \right), \qquad 1 \leq i  \leq k.
    \]
    Let $\lambda^*$ be a completely multiplicative function defined as
    \[
    \lambda^*(p)= 
    \begin{cases}
    f(p) & \text{if }1 \leq p \leq N_k;\\
    -1 & \text{otherwise}.
    \end{cases}
    \]
    Let $N_{k+1} = 2^{N_{k}}$. Applying Perron's formula with $b>0$, we get
    \begin{equation}\label{Perron}
    \sum_{1 \leq n \leq N_{k+1}}\frac{\lambda^*(n)}{n} = \frac{1}{2 \pi i} \int_{b-i N_{k+1}}^{b+ i N_{k+1}}  G(s+1)\frac{N_{k+1}^s}{s} ds + O\left(\frac{\log N_{k+1}}{N_{k+1}}\right)
    \end{equation}
    where \[G(s) = \frac{\zeta(2s)}{\zeta(s)}\prod_{p\in \bigcup_{i=0}^{k}J_i}\left(1+\frac{1}{p^s} \right)\left(1+\frac{f(p)}{p^s} \right)^{-1}.
    \] The Vinogradov-Korobov zero free-region for $\zeta(s)$ allows us to shift the integral in \eqref{Perron}, without encountering any pole of $G(s)$, to $\Re(s)=b'$ with
    \[
    b'=-\frac{B}{ (\log N_{k+1})^{\frac{2}{3}}(\log \log N_{k+1})^{\frac{1}{3}}},
    \] where $B>0$ is an absolute constant. 
   For $N_0$ sufficiently large, we have
    \[
    \prod_{p\in \bigcup_{i=0}^{k}J_i}\left(1+\frac{1}{p^s} \right)\left(1+\frac{f(p)}{p^s} \right)^{-1} \ll \prod_{p\in \bigcup_{i=0}^{k}J_i}\left(1+\frac{1}{p} \right)^2 \leq \prod_{p\in J_0}\left(1+\frac{1}{p} \right)^{2k} \leq e^{2k}.
    \] Notice that by construction $k \leq \log \log  N_{k+1}.$ Thus, arguing similarly as in the proof of the prime number theorem, the integral over the horizontal lines are easily bounded by $\ll (\log N_{k+1})^{-A}$ for any $A \geq 1.$ We obtain that for any $A \geq 1$, 
    \begin{align*}
    \sum_{1 \leq n \leq N_{k+1}}\frac{\lambda^*(n)}{n} & \ll  N_{k+1}^{b'}\int_{b'-i N_{k+1}}^{b'+ i N_{k+1}}  |G(s+1)|\frac{1}{|s|} ds + O\left(    \frac{1}{(\log N_{k+1})^A}+\frac{\log N_{k+1}}{N_{k+1}}\right) \\
    & \ll \exp(- (\log N_{k+1})^{1/3-\varepsilon}) + O\left(    \frac{1}{(\log N_{k+1})^A}\right) \ll \frac{1}{(\log N_{k+1})^A}.
    \end{align*}
    
    We select $f(p)$ for $N_k < p \leq N_{k+1}$ following the same lines as in the base case. We write 
    \begin{align*}
    \sum_{1 \leq n \leq N_{k+1}} \frac{\lambda^*(n)}{n} & = \sum_{\frac{N_{k+1}}{2}<p \leq N_{k+1}} \frac{\lambda(p)}{p} + \sum_{\frac{N_{k+1}}{C+1}<p \leq \frac{N_{k+1}}{C}}\frac{\lambda(p)}{p} \sum_{1 \leq n \leq C} \frac{\lambda(n)}{n} + E^*\\
    & = \sum_{\frac{N_{k+1}}{2}<p \leq N_{k+1}} \frac{\lambda(p)}{p} - \Delta \sum_{\frac{N_{k+1}}{C+1}<p \leq \frac{N_{k+1}}{C}}\frac{\lambda(p)}{p}+E^*\\
    & = -\sum_{\frac{N_{k+1}}{2}<p \leq N_{k+1}} \frac{1}{p} + \Delta \sum_{\frac{N_{k+1}}{C+1}<p \leq \frac{N_{k+1}}{C}}\frac{1}{p} + E^*.
    \end{align*}
    Similarly,
    \[
    E^*  = (\log 2 - \Delta \log (1+1/C)) \frac{1}{\log N_{k+1}} +  O\left( \frac{1}{(\log N_{k+1})^2}\right).
    \] Again, by Lemma \ref{l:flip}, there is $f(p) \in \{-1,1\}$ with $N_{k+1}/2<p\leq N_{k+1}$, such that 
    \[
    E^* + \sum_{N_{k+1}/2<p\leq N_{k+1}}\frac{f(p)}{p} \ll \frac{1}{N_{k+1}}
    \]  As before, we apply Lemma \ref{l:key_local_lemma} with 
      \[
    \cA=\cB=R(\cB)=\{p \in [N_{k+1}/(C+1),N_{k+1}/C]: p \text{ prime}\}
    \] and 
    \[-x_0 = \frac{1}{\Delta}\left(E^*+\sum_{N_{k+1}/2<p \leq N_{k+1}}\frac{f(p)}{p} \right).
    \]
    Therefore, there exists a sequence $\{f(p)\}_{p=N_{k+1}/(C+1)}^{N_{k+1}/C}$ in $\{-1,1\}$ and $c_0>0$ small, such that 
    \[
    \left|\sum_{p=N_{k+1}/(C+1)}^{N_{k+1}/C}\frac{f(p)}{p}-x_0 \right| \ll \exp\left( - c_0\left(\frac{N_{k+1}}{\log N_{k+1}} \right)^{1/3}\right).
    \] This concludes the induction and the proof.
    \end{proof}

    \section{Proofs of theorems \ref{t:postive_density_harmonic_series} and \ref{t:postive_upp_density_harmonic_series}}\label{S:pf_thms}
    \setcounter{lemma}{0} \setcounter{theorem}{0}
    \setcounter{equation}{0}

    
We will need the following estimate for the counting function of smooth numbers due to Dickman (see \cite[Equations (1.1) and (1.12)]{G08}).
    \begin{lemma}
    Let $S(x,y)$ be the set of integers up to $x$, all of whose prime factors are $ \leq y$ (such integers are called $y$-smooth), and let $\Psi(x,y)$ be the number of such integers. For any fixed $u \geq 1$
    \[
    \Psi(x,y) \sim x \rho(u), \quad \text{as } x \to \infty, \quad \text{where } x=y^u
    \]
    where, $\rho$ is the Dickman function and satisfies the estimate 
    \[
    \rho(u) = \frac{1}{u^{u(1+o(1))}}, \quad \text{ as } u \to \infty.
    \]
    \end{lemma}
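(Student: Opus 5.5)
We will prove the lemma in two independent parts. First we define $\rho$ directly by the delay-differential equation $\rho(u)=1$ for $0\le u\le 1$ and $u\rho'(u)=-\rho(u-1)$ for $u>1$, and show that $\Psi(x,x^{1/u})/x\to\rho(u)$ uniformly for $u$ in compact subsets of $[1,\infty)$. Second, we estimate $\rho(u)$ as $u\to\infty$ using only the functional equation.

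For the asymptotic $\Psi(x,y)\sim x\rho(u)$ we use Buchstab's identity. Sorting the non-$y$-smooth integers $n\le x$ by their largest prime factor $p\in(y,x]$, and writing $n=pm$ with $m\le x/p$ being $p$-smooth, gives
\[
\Psi(x,y)=\lfloor x\rfloor-\sum_{y<p\le x}\Psi\!\left(\frac{x}{p},p\right).
\]
We then induct on $\lceil u\rceil$.
\begin{enumerate}
\item For $1\le u\le 2$ every $x/p<p$, so $\Psi(x/p,p)=\lfloor x/p\rfloor$. Mertens' theorem then gives $\Psi(x,y)=x(1-\log u)+O(x/\log x)$, and $1-\log u=\rho(u)$ on $[1,2]$.
\item Assume uniform convergence on $[1,U]$. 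For $u\le U+1$, each term $\Psi(x/p,p)$ has parameter $v=\log(x/p)/\log p$ satisfying $v\le U$, except for $p$ very close to $y$, which contributes $o(x)$. The induction hypothesis then gives $\Psi(x/p,p)=(x/p)\,(\rho(v)+o(1))$.
\item Convert the sum over $p$ into an integral by partial summation with the prime number theorem $\pi(t)\sim t/\log t$. Substituting $p=x^{1/t}$ yields
\[
\Psi(x,y)=x\left(1-\int_1^u\frac{\rho(t-1)}{t}\,dt\right)+o(x),
\]
and the bracket is $\rho(u)$ by integrating the defining equation.
\end{enumerate}
Uniformity in $u$ follows from continuity of $\rho$ and monotonicity of $\Psi$ in $y$.

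For the size of $\rho$, we first rewrite the functional equation as $u\rho(u)=\int_{u-1}^u\rho(t)\,dt$; both sides have derivative $\rho(u)+u\rho'(u)=\rho(u)-\rho(u-1)$ and agree at $u=1$. Since $\rho$ is positive and decreasing, this gives $\rho(u)\le\rho(u-1)/u$, hence $\rho(u)\le 1/\Gamma(u+1)=u^{-u(1+o(1))}$. For the lower bound we would show by induction on intervals of length one that the logarithmic derivative satisfies $-\rho'(u)/\rho(u)=\rho(u-1)/(u\rho(u))\le \log(u\log u)+O(1)$. This uses $\rho(u-1)/\rho(u)=\exp\bigl(\int_{u-1}^u(-\rho'/\rho)\bigr)$ together with the inductive bound on the preceding interval. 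Integrating then gives $\rho(u)\ge \exp(-u\log u-O(u\log\log u))=u^{-u(1+o(1))}$.

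The main obstacle is this lower bound. The monotone comparison argument is delicate because the inequality must propagate through the delay: $\log(u\log u)$ feeds back into the exponent over a unit interval, and one must check that the constant does not blow up. If this becomes messy, we would instead use the Laplace transform $\hat\rho(s)=\exp\bigl(\gamma+\int_0^s(e^{-t}-1)\,dt/t\bigr)$ together with a saddle-point (or merely Chernoff-type) estimate at $s=-\xi(u)$, where $e^{\xi}=1+u\xi$. This gives $\rho(u)=\exp(-u(\log u+\log\log u-1+o(1)))$, which in particular yields $\rho(u)=u^{-u(1+o(1))}$.
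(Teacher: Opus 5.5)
The paper does not prove this lemma; it quotes Dickman's theorem and de Bruijn's estimate from Granville's survey \cite{G08}, so any self-contained argument is a different route. The first half of yours is fine. The Buchstab induction is the standard proof of $\Psi(x,y)\sim x\rho(u)$. Incidentally, $v=\log x/\log p-1<u-1\le U$ for every $p>y$, so no primes near $y$ need to be excluded. Also, $u\rho(u)=\int_{u-1}^u\rho\le\rho(u-1)$ correctly gives $\rho(u)\le 1/\Gamma(u+1)=u^{-u(1+o(1))}$.

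The gap is in your main route to the lower bound. Put $g=-\rho'/\rho$, so that $g(u)=u^{-1}\exp\bigl(\int_{u-1}^u g\bigr)$.

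\emph{Case $C>0$.} If you only know $g\le h:=\log(t\log t)+C$ on $[u-1,u]$, you get $g(u)\le u^{-1}\exp\bigl(\int_{u-1}^u h\bigr)\approx e^{C}\log u$. For large $u$ this exceeds $h(u)=\log u+\log\log u+C$. The constant is exponentiated at each step, so the bound does not propagate.

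\emph{Case $C\le 0$.} Here the step does close for large $u$, but the inequality itself is false for large $u$. Indeed $g(u)=\xi(u)+O(\log u/u)$ with $e^{\xi}=1+u\xi$, and $\xi(u)-\log(u\log u)\sim\log\log u/\log u>0$. So the induction can never be initialized.

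More generally, the step closes only when $e^{h(u)}$ is at most about $u\,h(u)$, i.e.\ when $h$ lies essentially below $\xi$. Since $g$ is essentially equal to $\xi$, there is no slack for an $O(1)$.

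Your fallback is de Bruijn's method and does work, but only as a genuine saddle-point inversion or a localization argument. A Chernoff bound $\rho(u)\le e^{-s(u-1)}\hat\rho(-s)$ with $s>0$ gives only the upper bound.

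A much shorter fix uses monotonicity: $u\rho(u)=\int_{u-1}^u\rho\ge\theta\,\rho(u-1+\theta)$ for any $\theta\in(0,1)$. Iterate with step $1-\theta$ until the argument drops below $1$, where $\rho=1$. This gives $\rho(u)\ge(\theta/u)^{u/(1-\theta)+1}$. Taking $\theta=1/\log u$ yields $\rho(u)\ge\exp\bigl(-u\log u-u\log\log u-O(u)\bigr)=u^{-u(1+o(1))}$.
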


    \begin{Rem}
    Before proving the above lemma, we remark that in the general case, one cannot expect the exponent $\theta(\delta)$ to be replaced by $1/3$. Indeed, taking 
    \[
    \cA = \Psi(N,N^{1/10}) \cup \{p_1\}, \quad \text{with} \quad N/2<p_1 \leq N,
    \]
    we have, by the prime number theorem,
    \[
    \left| \sum_{n \in \cA} \frac{a_n}{n} \right| \geq \frac{1}{\text{lcm}\{a:\ a\in A\}} \geq \frac{1}{p_1\prod_{p \leq N^{1/10}}p} \geq \frac{1}{\exp(2N^{1/10})}.
    \]
 This means that, up to the power of $N$, our result is optimal.
    \end{Rem}
    To apply Lemma \ref{l:key_local_lemma} successfully, we need to find a subset  $\cB \subset\cA$ and the corresponding set $R(\cB)$ with $\Omega(R(\cB)) \ll 1$. In the following lemma, we construct  $R(\cB)$ as the ``rough part'' of $\cB$ (or $\cA$). For any $n\in \cA$ and $\epsilon>0$, we can write uniquely $n=n_{r,\epsilon}n_{s,\epsilon}$ where $P^{-}(n_{r,\epsilon})>N^{\epsilon} $ is the $\epsilon$-rough part of $n$ and $P^{+}(n_{s,\epsilon}) \leq N^{\epsilon}$ is the $\epsilon$-smooth part of $n$.
    \begin{lemma}\label{l:multi_str_positive_density_set}
    Let $N$ be a sufficiently large integer, and $0<\delta \leq 1$. Suppose that $\cA \subset [N]$ with $|\cA| \geq \delta N$.
    For any $\epsilon_0>0$, there exists $0<\epsilon_1<1$ such that 
   \[
    |R_{\epsilon_1}(\cA)| \geq   N^{1-\epsilon_0},
    \]
    where $
    R_{\epsilon_1}(\cA):=\{n_{r,\epsilon_1}, n \in \cA\}$ is the set of $\epsilon_1$-rough parts of $\cA$.
    
    \end{lemma}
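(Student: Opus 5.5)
The approach is a pigeonhole argument over smooth parts. The map $n\mapsto (n_{s,\epsilon_1}, n_{r,\epsilon_1})$ is injective on $\cA$. So if $\cA$ had few distinct rough parts, most of $\cA$ would have to come from many distinct smooth parts. Smooth numbers that are large are rare by the Dickman estimate, and the number of small smooth parts is at most $N^{\epsilon}$-ish. We will balance these two facts.

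First we fix a threshold $y=N^{\epsilon_0/2}$ for the size of the smooth part. We split $\cA=\cA_1\cup\cA_2$, where
\[
\cA_1=\{n\in\cA:\ n_{s,\epsilon_1}\le y\},\qquad \cA_2=\{n\in\cA:\ n_{s,\epsilon_1}> y\}.
\]
Next we bound $|\cA_2|$. Every $n\in\cA_2$ is divisible by some $m>y$ that is $N^{\epsilon_1}$-smooth, and we may take the least such divisor. By the usual trick, removing primes one at a time from $n_{s,\epsilon_1}$, such an $m$ lies in $(y, yN^{\epsilon_1}]$. Hence
\[
|\cA_2|\le \sum_{\substack{y<m\le yN^{\epsilon_1}\\ P^+(m)\le N^{\epsilon_1}}}\frac{N}{m}.
\]
By partial summation together with the Dickman lemma (with $u\asymp \epsilon_0/\epsilon_1$, fixed), this is
\[
\ll N\,\rho\!\left(\tfrac{\epsilon_0}{2\epsilon_1}\right)\log(N^{\epsilon_1}) .
\]
Here the logarithm is an issue. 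It is better to use dyadic ranges $m\in(2^j,2^{j+1}]$: each contributes $\ll N\rho(u_j)$, and summing over $j$ with $u_j\ge \epsilon_0/(2\epsilon_1)$ gives $\ll N\sum_{u}\rho(u)\epsilon_1\log N$. The cleaner route is to bound instead
\[
|\cA_2|\le \#\{n\le N:\ n_{s,\epsilon_1}>y\}.
\]
This count is the classical estimate $\ll N\exp(-c\,u\log u)$ with $u=\log y/\log N^{\epsilon_1}=\epsilon_0/(2\epsilon_1)$; it follows from Rankin's trick, $\sum_{P^+(m)\le z}m^{-1+\sigma}\dots$, or from the Dickman lemma applied to $\Psi$ with fixed $u$. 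Choosing $\epsilon_1$ small in terms of $\delta,\epsilon_0$ makes this at most $\delta N/2$.

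Then $|\cA_1|\ge \delta N/2$. Each $n\in\cA_1$ is determined by the pair $(n_{s,\epsilon_1},n_{r,\epsilon_1})$, and the first coordinate takes at most $y=N^{\epsilon_0/2}$ values. Therefore
\[
|R_{\epsilon_1}(\cA)|\ge \frac{|\cA_1|}{y}\ge \frac{\delta}{2}N^{1-\epsilon_0/2}\ge N^{1-\epsilon_0}
\]
for large $N$.

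The main obstacle is the uniform bound on $\#\{n\le N:\ n_{s,\epsilon_1}>N^{\epsilon_0/2}\}$ with a saving that tends to $0$ as $\epsilon_1\to0$. I would first try Rankin's method:
\[
\sum_{n\le N} \mathbf 1_{n_{s}>y}\le \sum_{\substack{m>y\\ P^+(m)\le N^{\epsilon_1}}}\frac{N}{m}\le N y^{-\sigma}\prod_{p\le N^{\epsilon_1}}\bigl(1-p^{\sigma-1}\bigr)^{-1},
\]
with $\sigma=c/(\epsilon_1\log N)$. This yields $\ll N\epsilon_1\log N\cdot e^{-c\epsilon_0/(2\epsilon_1)}$. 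The stray $\log N$ is harmful, so I would instead restrict to $m$ being the full smooth part, whose complement has no small primes. This gives a sieve factor $\ll 1/(\epsilon_1\log N)$ that cancels the logarithm, via Brun or Selberg upper bound sieve, and leaves $\ll N e^{-c\epsilon_0/\epsilon_1}$. That bound is sufficient.
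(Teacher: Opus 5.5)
Your proof is correct. It uses the same pigeonhole as the paper, but it bounds the exceptional set by the dual summation.

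\textbf{Shared reduction.} Elements of $\cA$ whose $\epsilon_1$-smooth part is at most $N^{\epsilon_0/2}$ number at most $N^{\epsilon_0/2}|R_{\epsilon_1}(\cA)|$. The paper phrases this as ``rough part $>N^{1-\epsilon_0/2}$'' inside a proof by contradiction. So everything reduces to showing that few $n\le N$ have a large $N^{\epsilon_1}$-smooth part.

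\textbf{The paper's route.} It sums over the rough parts $r\le N^{1-\epsilon_0/2}$ and bounds the smooth cofactors by $\Psi(N/r,N^{\epsilon_1})\le (N/r)\rho(\epsilon_0/(2\epsilon_1))$. It then controls $\sum_r 1/r$ by Mertens' product $\prod_{N^{\epsilon_1}<p\le N}(1-1/p)^{-1}\asymp 1/\epsilon_1$. This is how it avoids the $\log N$ you ran into, and it yields a saving of $\rho(u)/\epsilon_1$.

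\textbf{Your route.} You sum over the smooth parts $s>N^{\epsilon_0/2}$ and bound the rough cofactors by an upper-bound sieve, $\ll (N/s)/(\epsilon_1\log N)$. You then use Rankin, $\sum_{s>y,\,P^+(s)\le N^{\epsilon_1}}1/s\ll e^{-u}\epsilon_1\log N$. The logarithms cancel and you get $\ll Ne^{-u}$ with $u=\epsilon_0/(2\epsilon_1)$.

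\textbf{What each buys.} Your argument uses only bounds that are genuinely uniform: Rankin and a crude sieve. The paper applies the fixed-$u$ Dickman asymptotic at the varying scales $N/r$. Strictly speaking that needs Dickman uniformly for $u$ in a compact range, or a bound such as $\Psi(x,y)\ll xe^{-u/2}$. In exchange, the paper needs only Mertens on the rough side. It also gets the sharper $e^{-(1+o(1))u\log u}$ decay, which you could recover by taking $\sigma=\log u/\log N^{\epsilon_1}$ in Rankin. Either saving suffices, since all that is needed is that it tends to $0$ as $\epsilon_1\to 0$. Your version also makes the dependence $\epsilon_1\asymp \epsilon_0/\log(1/\delta)$ explicit.

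\textbf{Two points to clean up.}

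First, discard the earlier attempts: partial summation, dyadic ranges, and the remark that the count follows ``from the Dickman lemma applied to $\Psi$ with fixed $u$''. The fixed-$u$ asymptotic only counts fully smooth integers, not integers with a large smooth part. Only your final sieve-plus-Rankin paragraph actually proves the bound you need.

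Second, the sieve bound $\#\{r\le X:\ P^-(r)>N^{\epsilon_1}\}\ll X/(\epsilon_1\log N)$ requires $X\ge N^{\epsilon_1}$. For $s>N^{1-\epsilon_1}$ the only admissible cofactor is $r=1$. These $n$ number at most $\Psi(N,N^{\epsilon_1})\sim N\rho(1/\epsilon_1)$, which is negligible and should be handled separately.
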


    \begin{proof}
    We define the set of $\epsilon_1$-smooth parts of $\cA$ as 
    \[S_{\epsilon_1}(\cA)= \{n_{s,\epsilon_1}, n \in \cA\}.  \]
  We  prove the lemma by contradiction. Assume that 
    \begin{equation}\label{upper_bndR}
    |R_{\epsilon_1}(\cA)| \leq N^{1- \epsilon_0} \quad \text{for all } 0<\epsilon_1<1.
    \end{equation} We clearly have 
    \begin{align*}
        \vert \cA \vert  
        & \leq \sum_{r \in R_{\epsilon_1}(\cA)} \sum_{s \in S_{\epsilon_1}(\cA) \atop rs \in \cA}1  \\
        & \leq \sum_{\substack{r \in R_{\epsilon_1}(\cA) \\ 1 \leq r \leq N^{1-\epsilon_0/2}}} \sum_{s \in S_{\epsilon_1}(\cA)}1 + \sum_{\substack{r \in R_{\epsilon_1}(\cA) \\ N^{1-\epsilon_0/2} < r \leq N }} \sum_{\substack{s \in S_{\epsilon_1}(\cA) \\ 1 \leq s \leq N^{\epsilon_0/2}}}1.
    \end{align*}
    By \eqref{upper_bndR}, the second term of the above is bounded by
    \[
    |R_{\epsilon_1}(\cA)|N^{\epsilon_0/2} \leq N^{1-\epsilon_0/2} =     o(|\cA|). 
    \]
    Hence,
    \begin{equation}\label{e:low_bdd_small_rough_part}
    \sum_{\substack{r \in R_{\epsilon_1}(\cA) \\ 1 \leq r \leq N^{1-\epsilon_0/2}}} \sum_{s \in S_{\epsilon_1}(\cA)}1 \geq (1+o(1))|\cA| \geq (\delta+o(1))N.   
    \end{equation}
    On the other hand,
    \begin{align*}
    \sum_{\substack{r \in R_{\epsilon_1}(\cA) \\ 1 \leq r \leq N^{1-\epsilon_0/2}}} \sum_{s \in S_{\epsilon_1}(\cA)}1 & \leq \sum_{k=1}^{\infty}\sum_{\substack{N^{\epsilon_1} \leq p_1 \leq \dots \leq p_k \leq N^{1-\epsilon_0/2} \\ N^{\epsilon_1} \leq p_1\cdots p_k \leq N^{1-\epsilon_0/2}}} \Psi\left(\frac{N}{p_1\cdots p_k},N^{\epsilon_1} \right)\\
    & \leq N\rho\left(\frac{\epsilon_0}{2 \epsilon_1} \right) \sum_{k=1}^{\infty}\left(\sum_{\substack{N^{\epsilon_1} \leq p_1,\dots,p_k \leq N^{1-\epsilon_0/2}}} \frac{1}{p_1 \cdots p_k} \right)\\
    & \leq N \rho\left(\frac{\epsilon_0}{2 \epsilon_1} \right) \prod_{N^{\epsilon_1}\leq p \leq N^{1-\epsilon_0/2}}\left( 1- \frac{1}{p}\right)^{-1}\\
    & \leq N\left(\frac{\epsilon_0}{2 \epsilon_1} \right)^{-\left(\frac{\epsilon_0}{2 \epsilon_1} \right)(1+o(1))} \frac{1-\epsilon_0/2}{\epsilon_1} = o_{\epsilon_1 \to 0}(N)
    \end{align*}
which contradicts the lower bound in (\ref{e:low_bdd_small_rough_part}).
    \end{proof}

\begin{lemma}\label{l:deter_low_upp_large_bdd}
\begin{enumerate}
    \item Let $\cA \subset \N$ have lower density greater than $\delta>\delta_0>0$. There exists a sequence $\{a_n\}_{n=1}^{\infty}$ in $\{-1,1\}$ and a sufficiently large integer $M$ such that
    \[
    \left|\sum_{n \in \cA \cap[N]} \frac{a_n}{n} \right|\leq \frac{2}{\delta_0}\frac{1}{N}, \qquad N>M.
    \]
    \item Let $\cA \subset \N$ have upper density $\delta> \delta_0>0$. There exists a sequence of integers $(P_i)_{i=0}^{+\infty}$ such that $|\cA \cap [P_i]| \geq \delta_0 P_i$ and \[
    \left|\sum_{n \in \cA \cap[P_i]} \frac{a_n}{n} \right|\leq \frac{2}{\delta_0}\frac{1}{P_i}.
    \]
    
    
\end{enumerate}

\end{lemma}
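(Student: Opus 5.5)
We will construct the signs greedily, one element of $\cA$ at a time, always choosing the sign that pushes the partial sum toward $0$. Enumerate $\cA=\{n_1<n_2<\cdots\}$ and set $S_0=0$. Given $S_{j-1}=\sum_{i<j} a_{n_i}/n_i$, put $a_{n_j}=-1$ if $S_{j-1}>0$ and $a_{n_j}=+1$ if $S_{j-1}\le 0$. For $n\notin\cA$ the value $a_n$ is irrelevant, say $a_n=1$. The same sequence will serve both parts. Part (2) only differs in the choice of the scales $P_i$.

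The basic mechanism is the following. Once $|S_{j-1}|\le 1/n_j$, we get $|S_j|\le \max(|S_{j-1}|,1/n_j)\le 1/n_j$, because we subtract $1/n_j$ from a quantity of the opposite sign or of size at most $1/n_j$. Conversely, while $|S_{j-1}|>1/n_j$, the greedy step strictly decreases $|S|$ by $1/n_j$. Consequently, after the first time $\tau$ with $|S_\tau|\le 1/n_{\tau+1}$, we have $|S_j|\le 1/n_j$ for all $j>\tau$. Indeed, $|S_j|\le 1/n_j$ implies $|S_{j}|\le 1/n_j$ together with $1/n_{j+1}<1/n_j$, so at the next step either the sign is opposite or $|S_j|$ is already small, giving $|S_{j+1}|\le \max(|S_j|-1/n_{j+1},\,1/n_{j+1})\le 1/n_j$. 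A slightly sharper bookkeeping keeps it at most $1/n_j$ for the current index. To ensure the time $\tau$ exists we use divergence: $\cA$ has positive density, so $\sum_{n\in\cA}1/n=\infty$, and the greedy decrease must eventually bring $|S|$ below the step size. This ``eventually'' is what produces the threshold $M$.

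Next we convert the bound $1/n_j$ (where $n_j$ is the largest element of $\cA\cap[N]$) into the bound $\frac{2}{\delta_0 N}$. The partial sum up to $N$ equals $S_j$ with $n_j=\max(\cA\cap[N])$. Here the density enters. In part (1), lower density exceeding $\delta_0$ gives $|\cA\cap[N]|\ge\delta_0 N$ for $N>M$, and then $\max(\cA\cap[N])\ge |\cA\cap[N]|\ge \delta_0 N$. Hence $|S_j|\le 1/n_j\le 1/(\delta_0 N)\le \frac{2}{\delta_0 N}$. The factor $2$ gives slack to absorb the transitional step if we only know $|S_j|\le 2/n_j$ in the cruder version of the invariant.

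For part (2), we pick $P_i\to\infty$ with $|\cA\cap[P_i]|\ge\delta_0 P_i$, which is possible by the definition of upper density. We discard finitely many initial terms so that $P_i$ exceeds the stabilisation time $n_{\tau+1}$; this is possible because divergence of $\sum_{n\in\cA}1/n$ still follows, since upper density positive along $P_i$ gives $\sum_{n\in \cA\cap[P_i]}1/n\ge |\cA\cap[P_i]|/P_i\ge\delta_0$ on each of infinitely many disjoint dyadic-ish blocks. We can arrange those blocks by taking the $P_i$ lacunary, e.g. $P_{i+1}\ge P_i/\delta_0^{2}$, so that $\cA\cap(P_i,P_{i+1}]$ still carries harmonic mass $\gg \delta_0$. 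The same estimate $\max(\cA\cap[P_i])\ge\delta_0P_i$ then finishes. The main obstacle is this divergence/stabilisation step in the upper-density case. Proving that the greedy sum enters the $1/n$ window requires $\sum_{n\in\cA}1/n=\infty$, which we establish via the lacunary choice of $P_i$ just described. Everything else is elementary bookkeeping.
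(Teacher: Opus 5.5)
Your greedy construction is viable, but the invariant your argument rests on is false. From $|S_j|\le 1/n_j$ your displayed inequality only gives $|S_{j+1}|\le\max(|S_j|-1/n_{j+1},\,1/n_{j+1})\le 1/n_j$, not $|S_{j+1}|\le 1/n_{j+1}$. Iterating therefore yields only $|S_j|\le 1/n_{\tau+1}$, which does not decay with $N$. The stronger claim also genuinely fails when $\cA$ has multiplicative gaps. Right after a sign-flip step with $|S_{j-1}|$ tiny, one has $|S_j|=1/n_j-|S_{j-1}|\approx 1/n_j$. If the next element is $n_{j+1}=4n_j$, then $|S_{j+1}|=|S_j|-1/n_{j+1}\approx 3/n_{j+1}$. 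Sets such as $\bigcup_k[8^k,2\cdot 8^k]$ have positive lower density and infinitely many such gaps, and in part (2) the gaps between good scales can be arbitrarily long. So no ``sharper bookkeeping'' of the form $|S_j|\le C/n_j$ exists, and the step ``$\max(\cA\cap[N])\ge\delta_0N$, hence $|S|\le 1/(\delta_0N)$'' has nothing to stand on. Also, you do not need divergence to produce $\tau$: since $S_0=0\le 1/n_1$, you have $\tau=0$.

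What is missing is a use of density through the \emph{harmonic mass of $\cA$ near $N$}, rather than through the size of the largest element. Call $m$ a reset if $|S_{m-1}|\le 1/n_m$.
\begin{enumerate}
\item After a reset, $|S_j|\le 1/n_m$ for all later $j$, because $|S_{j+1}|\le\max(|S_j|,1/n_{j+1})$.
\item Between resets, $|S|$ drops by exactly $1/n$ for each $n\in\cA$. Divergence of $\sum_{n\in\cA}1/n$ (this is where it is needed) forces resets to recur. Hence, for $N$ large, the last reset $n_m\le N$ satisfies $n_m>2/\delta_0$.
\item Suppose $n_m<\delta_0N/2$. Then $|\cA\cap(n_m,N]|\ge\delta_0N/2$, so $\sum_{n\in\cA\cap(n_m,N]}1/n\ge\delta_0/2>1/n_m\ge|S_m|$. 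This forces another reset in $(n_m,N]$, a contradiction.
\end{enumerate}
Hence $n_m\ge\delta_0N/2$ and $|\sum_{n\in\cA\cap[N]}a_n/n|\le 2/(\delta_0N)$, which is exactly the stated constant. Running the same argument only at $N=P_i$ gives (2).

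This is in substance the paper's proof. It groups $\cA$ into lacunary blocks $(N_{i-1},N_i]$ with $N_i=\lceil 2N_{i-1}/\delta_0\rceil$, each carrying harmonic mass at least $\frac12\log\frac{1}{1-\delta_0/2}$, and applies the flipping lemma block by block.
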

\begin{proof}
We first prove $(1)$. By the definition of limit inferior, there exists a large integer $N_0$ such that $|\cA \cap [N]| \geq \delta_0 N$ for all $N \geq N_0$. First note that by applying the greedy algorithm, one can easily show that there exists a sequence $\{a_n\}_{n=1}^{N_0}$ in $\{-1,1\}$ such that 
\begin{equation}\label{e:bdd_harmonic_sum_positive_lower_density}
\left |\sum_{n \in \cA \cap [N_0]}\frac{a_n}{n} \right| \leq 1.
\end{equation}
Indeed, let
\[
\cA \cap [N_0] = \{n_1,n_2,\dots n_k\} \quad \text{with} \quad n_i<n_{i+1} \text{ for } 1 \leq i \leq k-1.
\]
We prove (\ref{e:bdd_harmonic_sum_positive_lower_density}) by induction. Let $\vert S_1\vert =\frac{1}{n_1} \leq 1.$  Assume that we already found $\{a_{n_i}\}_{i=1}^{j}$ such that $|S_j| \leq 1$ where $S_j = \sum_{i=1}^{j} \frac{a_{n_i}}{n_i}$.  There exists $a_{n_{j+1}} \in \{-1,1\}$ such that 
\[
|S_{j+1}| = \left|S_j+ \frac{a_{n_{j+1}}}{n_{j+1}}\right| \leq 1.
\] 
Define by induction the sequence $N_i = \lceil \frac{2}{\delta_0} N_{i-1} \rceil$ for $i \geq 1$. Noting that $N_{i-1} \leq \frac{\delta_0}{2} N_i$, we get that, for any $i \geq 1$, 
\begin{equation}\label{low:Ni}
\sum_{N_{i-1}<n\leq N_i \atop n \in \cA}\frac{1}{n}  \geq \sum_{(1- \delta_0/2)N_i < n \leq N_i} \frac{1}{n} \geq \frac{1}{2}\log \left (\frac{1}{1-\delta_0/2} \right).
\end{equation}
Let $I:= 4\left\lceil \log^{-1} \left (\frac{1}{1-\delta_0/2} \right) \right\rceil$, using \eqref{low:Ni}, we get 
\[
\sum_{N_0<n\leq N_I\atop n \in \cA}\frac{1}{n} = \sum_{i=1}^{I}\sum_{N_{i-1}<n\leq N_i \atop n \in \cA}\frac{1}{n} \geq 2.
\] By Lemma \ref{l:flip}, there exists a sequence $\{a_n\}_{n=1}^{N_I}$ in $\{-1,1\}$ such that
 $$\left |\sum_{n \in \cA \cap [N_I]}\frac{a_n}{n} \right| \leq \frac{1}{N_0} \leq \left(\frac{2}{\delta_0}\right)^{I} \frac{1}{N_I}.   $$ 
Choosing $N_0>\frac{2}{-\log(1-\delta_0/2)},$ 
 an easy induction using \eqref{low:Ni} and Lemma \ref{l:flip} implies that for all $j>I$, there exists a sequence $\{a_n\}_{n=1}^{N_{j+1}}$ in $\{-1,1\}$ such that
 \begin{equation}\label{boundN_i} \left |\sum_{n \in \cA \cap [N_{j+1}]}\frac{a_n}{n} \right| \leq \frac{1}{N_j} \leq \left(\frac{2}{\delta_0}\right) \frac{1}{N_j}.  \end{equation}   Let $M:=N_{I}$ and $N>M$. There exists a unique $j \geq I$ such that $N_j < N \leq N_{j+1}.$ When going from $j$ to $j+1$ in \eqref{boundN_i}, we can ensure that at each step the sum is decreasing. Hence, the result follows.


We now prove $(2)$. Let $0<\varepsilon<-\frac{\log \left (1-\delta_0/2 \right)}{2}$. Repeating the same argument as in the first case, we can find $M$ such that 
\[
\left|\sum_{n \in \cA \cap[M]} \frac{a_n}{n} \right|\leq \epsilon.
\]

Let $P_0 := M$, by the definition of limit superior, one can find an infinite sequence $\{P_i\}_{i=1}^{\infty}$ such that $P_i \geq \lceil \frac{2}{\delta_0} P_{i-1} \rceil$ and $|\cA \cap [P_i]| \geq \delta_0 P_i$ for all $i \geq 1$. Thus, we have $$\vert \cA \cap (\frac{\delta_0}{2} P_i,P_i) \vert \geq \frac{\delta_0}{2} P_i.$$ 
Starting from $P_0$ and applying the greedy algorithm, we can assume that
\[
\left|\sum_{ 1 \leq n \leq \delta_0 P_i/2 \atop n \in \cA} \frac{a_n}{n} \right|\leq \epsilon.
\]
Note that 
\[
\sum_{\delta_0 P_i/2 <n \leq P_i \atop n \in \cA }\frac{1}{n}  \geq \sum_{(1- \delta_0/2)P_i < n \leq P_i} \frac{1}{n} = \log \left (\frac{1}{1-\delta_0/2} \right)>\epsilon.
\] By Lemma \ref{l:flip}, we can find $\{a_n\} \in \{-1,1\}$ such that 
\[
\left|\sum_{n \in \cA \cap [P_i]} \frac{a_n}{n} \right|\leq \frac{2}{\delta_0} \frac{1}{P_i}.
\]

\end{proof}

It is time to prove Theorems \ref{t:postive_density_harmonic_series} and \ref{t:postive_upp_density_harmonic_series}

\begin{proof}[Proofs of Theorem \ref{t:postive_density_harmonic_series} and \ref{t:postive_upp_density_harmonic_series}]
By Lemma \ref{l:deter_low_upp_large_bdd}, there exists a large integer $M$ such that for $N>M$, 
\begin{equation}\label{small_and_density}
\sum_{n \in \cA_0 \cap[N]}\frac{a_n}{n} \ll \frac{1}{N}
\end{equation}  and $|\cA_0 \cap [N]| \geq \delta N.$
Let $N_0=\lceil 10\frac{M}{\delta} \rceil,$ for $N>N_0$ we have $$\vert \cA_0 \cap (\frac{\delta}{2} N,N) \vert \geq \frac{\delta}{2} N. $$ 
We can now apply Lemma \ref{l:key_local_lemma} in the range $[\frac{\delta}{2} N, N]$ for $N>N_0$. Let $\cA= \cA_0 \cap [\frac{\delta}{2} N, N]$ and  $0<\epsilon_0<1/3$. By Lemma \ref{l:multi_str_positive_density_set}, we can choose $\epsilon_1>0$ such that 
\[
|R_{\epsilon_1}(\cA)| > N^{1-\epsilon_0}.
\] Note that by definition we have $\Omega(R_{\epsilon_1}(\cA)) \leq \frac{1}{\epsilon_1} \ll 1$.
We set
\[
\cB = \{ r s_{r} \in \cA: r \in R_{\epsilon_1}(\cA)\text{ and }s_r= \min_{rs \in \cA}\{s \} \} \quad \text{and} \quad R(\cB)=R_{\epsilon_1}(\cA)
\]  and 
$$ x_0 = -\sum_{n \in \cA_0 \cap [\frac{\delta}{2}N]} \frac{a_n}{n}.$$ Theorem \ref{t:postive_density_harmonic_series} follows by applying Lemma \ref{l:key_local_lemma}. \\

The proof of Theorem \ref{t:postive_upp_density_harmonic_series} is essentially the same as the proof of Theorem \ref{t:postive_density_harmonic_series}. The only difference is that \eqref{small_and_density} holds along the sequence $(P_i)_{i=0}^{+\infty}$ coming from the second part of Lemma \ref{l:deter_low_upp_large_bdd}. 


\end{proof}
    
    \section*{Acknowledgement} The authors would like to thank Sacha Mangerel for his insightful and valuable comments.    
    O. K. and Y-C. S. gratefully acknowledge the hospitality of Universit\'{e} Jean Monnet during their visit, when part of this work was completed. O.K. would like to thank CRM (Montreal) for providing excellent working conditions.
    During the preparation of this work, M. M. was supported by AAP Recherche 2025 UJM ``Comportements al{\'e}atoires en arithm{\'e}tique". Moreover, this research was funded in whole or in part by the French National Research Agency (ANR) under project number ``ANR-25-CE40-1961-01''.

\bibliography{harmonic_sum}
\bibliographystyle{plain}
 \end{document}